\newtheorem{thm}{Theorem}
\newtheorem{lemma}[thm]{Lemma}
\newtheorem{prop}[thm]{Proposition}
\newtheorem*{claim}{Claim}
\newcommand{\Bin}{\mathrm{Bin}}
\newcommand{\eps}{\varepsilon}
\newcommand{\cC}{\mathcal{C}}
\newcommand{\cP}{\mathcal{P}}
\newcommand{\cS}{\mathcal{S}}
\newcommand{\cSp}{\mathcal{S}'}
\newcommand{\Tm}{T_{\mathrm{mix}}}
\newcommand{\diam}{\mathrm{diam}}
\newcommand{\dTV}{d_{\mathrm{TV}}}
\newcommand{\pG}{G^*}
\newcommand{\Ds}{D^*}
\newcommand{\ds}{\delta^*}
\newcommand{\Ks}{K^*}
\renewcommand{\le}{\leqslant}
\renewcommand{\ge}{\geqslant}
\begin{document}

\title{Smoothed analysis on connected graphs.\footnote{Preliminary version of this work was presented at RANDOM, 2014, Barcelona.}}

\author{
  Michael Krivelevich\thanks{School of Mathematical Sciences, Tel Aviv University, Tel Aviv 69978, Israel. Research supported in part by the USA-Israel BSF Grant 2010115 and by grant 912/12 from the Israel Science Foundation. E-mail address: {\tt krivelev@post.tau.ac.il}.}
  \and
  Daniel Reichman\thanks{Department of Computer Science, Cornell University, Ithaca, NY. Supported in part by The Israel Science Foundation (grant No.\ 621/12). E-mail address: {\tt daniel.reichman@gmail.com}.}
  \and
  Wojciech Samotij \thanks{School of Mathematical Sciences, Tel Aviv University, Tel Aviv 69978, Israel; and Trinity College, Cambridge CB2 1TQ, UK. Research supported in part by ERC Advanced Grant DMMCA and grants from the Israel Science Foundation. E-mail address: {\tt samotij@post.tau.ac.il}.}
}

\maketitle

\thispagestyle{empty}

\begin{abstract}
 The main paradigm of smoothed analysis on graphs suggests that for any large graph $G$ in a certain class of graphs, perturbing slightly the edge set of $G$ at random (usually adding few random edges to $G$) typically results in a graph having much ``nicer'' properties. In this work we study smoothed analysis on trees or, equivalently, on connected graphs. Given an $n$-vertex connected graph $G$, form a random supergraph $\pG$ of $G$ by turning every pair of vertices of $G$ into an edge with probability $\frac{\eps}{n}$, where $\eps$ is a small positive constant. This perturbation model has been studied previously in several contexts, including smoothed analysis, small world networks, and combinatorics.

  Connected graphs can be bad expanders, can have very large diameter, and possibly contain no long paths. In contrast, we show that if $G$ is an $n$-vertex connected graph, then typically $\pG$ has edge expansion $\Omega(\frac{1}{\log n})$, diameter $O(\log n)$, vertex expansion $\Omega(\frac{1}{\log n})$, and contains a path of length $\Omega(n)$, where for the last two properties we additionally assume that $G$ has bounded maximum degree. Moreover, we show that if $G$ has bounded degeneracy, then typically the mixing time of the lazy random walk on $\pG$ is $O(\log^2 n)$. All these results are asymptotically tight.
\end{abstract}


\setcounter{page}{1}

\section{Introduction}

In this paper, we consider the following model of randomly generated graphs. We are given a fixed undirected graph $G=(V,E)$ on $n$ vertices. For every pair $f \in {V \choose 2}$, we add $f$ to $G$, independently of all other pairs, with probability $\frac{\eps}{n}$, where $\eps$ is a small (yet fixed) positive constant. Let $R$ be the set of edges added and consider the random graph
\[
\pG := (V,E \cup R).
\]
This model can be viewed as a generalization of the classical Erd\H{o}s--R\'{e}nyi random graph, where one starts from an empty graph and adds edges between all possible pairs of vertices independently with a given probability. The focus on ``small'' $\eps$ means that we are interested in the effect of a rather gentle random perturbation. In particular, the average degree of $\pG$ is (typically) close to that of $G$ (assuming that $G$ is connected, for example). Studying the effect of small perturbations on graphs, matrices, and other structures arises in diverse settings in several fields such as combinatorics, design and analysis of algorithms, linear algebra, and mathematical programming. We refer the reader to Section~\ref{sec:related-works} for more details.

 In this work, we study several properties of $\pG$, when $G$ is \emph{connected}. We first need a few definitions.  For a graph $G=(V,E)$ and a subset $S \subseteq V$, we denote by $\partial S$ the set of all edges of $G$ with exactly one endpoint in $S$. We define $N(S)$ to be the set of all vertices in $V \setminus S$ that have a neighbor in $S$. When the graph $G$ is not clear from the context, we will use the notation $\partial_G S$ and $N_G(S)$ to avoid ambiguity. The \emph{edge}-isoperimetric number of $G$ (also known as the \emph{Cheeger constant}), denoted $c(G)$, is defined by
\[
c(G):=\min \left\{ \frac{|\partial(U)|}{|U|} \colon 0<|U|\le \frac{|V|}{2} \right\}.
\]
Similarly, the \emph{vertex}-isoperimetric number of $G$, denoted $\iota(G)$, is defined by
\[
\iota(G):=\min \left\{ \frac{|N(U)|}{|U|} \colon 0<|U|\le \frac{|V|}{2} \right\}.
\]
Somewhat informally we shall refer to $c(G)$ and $\iota(G)$ as the edge and the vertex expansions of $G$, respectively. Observe that $c(G) / \Delta(G) \le \iota(G) \le c(G)$, where $\Delta(G)$ is the maximum degree of $G$. Hence, when $\Delta(G)$ is bounded by a constant, then the vertex and edge expansions of $G$ have the same order of magnitude. On the other hand, there are graphs $G$ with $\Delta(G)$ not bounded by a constant (such as stars) for which $\iota(G) = O(c(G)/\Delta(G))$. The Cheeger constant has been studied extensively as it is related to a host of combinatorial properties of the underlying graph. In particular, there is a strong connection between the Cheeger constant of $G$ and the mixing time of the lazy random walk on $G$.

\subsection{Our results}

We begin by describing our results regarding the expansion properties of perturbed connected graphs.

For every \emph{connected} $n$-vertex graph $G$, it holds that $\iota(G) = \Omega(\frac{1}{n})$ as every subset $S \subseteq V$ has at least one neighbor outside $S$. Moreover, if $G$ is a tree, then $\iota(G) = O(\frac{1}{n})$. Our first result is that for every \emph{connected} graph with bounded maximum degree, the random perturbation $\pG$ asymptotically almost surely\footnote{That is, with probability tending to $1$ as the number of vertices $n$ tends to infinity.} (a.a.s.) satisfies $\iota(\pG) = \Omega(\frac{1}{\log n})$.

\begin{thm}
  \label{thm:tree}
  There exists a constant $\delta > 0$ such that the following holds. Let $G$ be an $n$-vertex connected graph with maximum degree $\Delta$. If $R \sim G(n,\frac{\eps}{n})$ for some $\eps = \eps(n) \le 1$, then a.a.s.\ the graph $\pG=G \cup R$ has vertex expansion at least $\frac{\delta \eps}{\Delta^3\log n}$.
\end{thm}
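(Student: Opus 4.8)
The plan is to fix a vertex subset $S$ with $0<|S|=k\le n/2$, bound the probability that $S$ has fewer than $b:=\frac{\delta k}{\Delta^3\log n}$ neighbours in $\pG$, and take a union bound over all such $S$. Two cases are immediate. First, if $k<\Delta^3(\log n)/\delta$ then $b<1$, and since $G$ is connected and $S\ne V$ we have $|N_{\pG}(S)|\ge|N_G(S)|\ge 1>b$. Second, if $|N_G(S)|\ge b$ then already $|N_{\pG}(S)|\ge b$ deterministically. So it remains to treat the \emph{deficient} sets: those $S$ with $k\ge\Delta^3(\log n)/\delta$ and $|N_G(S)|<b$; for these we must produce many new neighbours from the random edges $R$.

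For a single deficient $S$ this is easy: writing $X:=|N_R(S)\setminus S|$ for the number of vertices outside $S$ joined to $S$ by an edge of $R$, we have $N_{\pG}(S)\supseteq N_R(S)\setminus S$, and $X$ is a sum of $n-k$ independent indicators, one per vertex $v\notin S$, each equal to $1$ with probability $1-(1-\eps/n)^k$; hence $\mathbb{E}[X]=(n-k)\bigl(1-(1-\eps/n)^k\bigr)\ge\eps k/4$ (using $k\le n/2$ and $\eps$ small). Since $b\le\tfrac12\mathbb{E}[X]$ once $\delta$ is small, a Chernoff bound gives $\Pr[X<b]\le\Pr[X\le\tfrac12\mathbb{E}[X]]\le e^{-\mathbb{E}[X]/8}\le e^{-\eps k/32}$. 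The difficulty is that this bound, roughly $e^{-\Theta(\eps k)}$, is hopeless against a union bound over all $\binom{n}{k}\approx n^{k}$ sets of size $k$: ignoring $G$ entirely simply does not work.

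The resolution — and the crux of the argument — is that deficient sets are \emph{rare}, precisely because $G$ is connected. If $|N_G(S)|<b$, put $W:=N_G(S)$; then $S$ is a union of connected components of $G-W$, and since $G$ is connected with maximum degree $\Delta$, deleting $W$ creates at most $1+(\Delta-1)|W|\le\Delta b$ components. Hence a deficient $S$ of size $k$ is specified by choosing $W$ (at most $\sum_{w\le b}\binom{n}{w}\le n^{b}$ ways) and then a subset of the resulting components (at most $2^{\Delta b}$ ways). A direct estimate, using that the denominator of $b$ contains the factor $\Delta^3\log n$, shows that the number of deficient sets of size $k$ is at most $e^{C\delta k}$ for an absolute constant $C$ (each of $b\log n$, $\Delta b$, $b$ is $O(\delta k)$). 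One may, if preferred, first replace $G$ by a spanning tree $\dG$, which only shrinks $N_{\pG}(S)$ and keeps the maximum degree at most $\Delta$; then $G-W$ is literally a forest and the component count is even cleaner.

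Putting the pieces together, the probability that some deficient set of size $k$ has fewer than $b$ neighbours in $\pG$ is at most $e^{C\delta k}\cdot e^{-\eps k/32}$, which is at most $e^{-\eps k/64}$ once $\delta$ is chosen small enough in terms of $\eps$. Summing this geometric tail over $k$ from $\Delta^3(\log n)/\delta$ up to $n/2$ gives $O\bigl(e^{-\eps(\log n)/(64\delta)}\bigr)=o(1)$. Combined with the two deterministic cases, this shows that a.a.s.\ $|N_{\pG}(S)|\ge\frac{\delta k}{\Delta^3\log n}$ for every $S$ with $0<|S|\le n/2$, i.e.\ $\iota(\pG)\ge\frac{\delta}{\Delta^3\log n}$. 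The only genuinely non-routine step is the structural counting of deficient sets: the whole point is that one must not union bound over all of $\binom{V}{k}$ but only over the sets whose $G$-boundary is small, and it is exactly the connectivity of $G$ — via the bound on the number of components of $G-W$ — that keeps their number under control.
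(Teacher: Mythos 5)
Your proof is correct, but it takes a genuinely different route from the one in the paper. The paper partitions $V(G)$ into $t=\Theta(n/(\Delta\log n))$ disjoint \emph{connected} blobs $V_1,\dots,V_t$, each of size between $C\Delta\log n$ and $C\Delta^2\log n$, shows that a.a.s.\ every family $I$ of at most $t/2$ blobs acquires, via $R$, edges to at least $|I|/2$ blobs outside $I$, and then converts this blob-level expansion into vertex expansion of $\pG$ by a deterministic case analysis on how a given set $A$ meets the blobs (the connectivity of each blob supplies a neighbour of $A$ inside every partially-hit blob). You instead run a first-moment argument directly over vertex sets, and your key observation --- that a set $S$ with $|N_G(S)|=w<b$ is a union of connected components of $G-N_G(S)$, of which there are at most $1+(\Delta-1)w$ by connectivity and the degree bound, so that deficient sets of size $k$ number at most roughly $n^{b}2^{\Delta b}=e^{O(\delta k)}$ --- plays exactly the role that the blob decomposition plays in the paper. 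Your argument is in fact closer in spirit to the paper's proof of its edge-expansion theorem (Theorem~\ref{thm:expansion}), which also proceeds by counting sets with small $G$-boundary, though there the counting is done via Proposition~\ref{prop:Cvab} and needs no degree assumption. Your route is arguably more economical and even gives a slightly stronger conclusion: since only the $2^{\Delta b}$ factor genuinely needs a power of $\Delta$ in the denominator of $b$, the same computation yields vertex expansion $\Omega(1/(\Delta\log n))$ rather than $\Omega(1/(\Delta^{3}\log n))$. What the paper's blob decomposition buys in exchange is reusability: it also drives the long-path result (Theorem~\ref{thm:long}), which your counting argument does not obviously give. The small imprecisions in your write-up ($\sum_{w\le b}\binom{n}{w}\le n^{b}$ fails for $b=1$, but $n^{2b}$ suffices and changes nothing; the reduction to ``$\eps$ small'' should be justified by monotonicity in $\eps$, as the paper does elsewhere) are harmless.
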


We note that in general one cannot remove restrictions on the maximum degree entirely. To see this, consider the case when $G = K_{1,n-1}$. After adding to $G$ \emph{any} $\eps n$ edges, there will be an independent set $S$ with at least $(1-2\eps)n$ vertices such that $|N(S)| = 1$.

We obtain a similar bound on the \emph{edge}-expansion without any assumptions on the maximum degree.

\begin{thm}
  \label{thm:expansion}
  For every $\eps > 0$ and $\alpha < 1$, there exists $\delta > 0$ such that the following holds. Let $G$ be an $n$-vertex connected graph, choose $R \sim G(n,\frac{\eps}{n})$, and let $\pG = G \cup R$. Then a.a.s.\ for every set $S \subseteq V(G)$ with $|S| \le \alpha n$,
  \[
  |\partial_{\pG} S| \ge \frac{\delta}{\log (en/|S|)} |S|.
  \]
  In particular, $c(\pG) \ge \frac{\delta}{\log (en)}$.
\end{thm}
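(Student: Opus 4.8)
The plan is to reduce to the case that $G$ is a tree and then run a first‑moment argument over all ``potentially bad'' sets, using the tree to keep the number of such sets under control. Since $G$ is connected it contains a spanning tree $T$, and $\partial_{\pG}S\supseteq\partial_{T\cup R}S$ for every $S$, so it suffices to prove the statement with $G$ replaced by $T$. Write $s=|S|$ and $\tau(s)=\delta s/\log(en/s)$. As $\partial_{\pG}S$ contains both $\partial_T S$ and the set $\partial_R S$ of random edges crossing $S$, any $S$ with $|\partial_{\pG}S|<\tau(s)$ simultaneously satisfies $|\partial_T S|<\tau(s)$ and $|\partial_R S|<\tau(s)$. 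Call a set $S$ with $1\le|S|\le\alpha n$ \emph{light} if $|\partial_T S|<\tau(s)$; this is a deterministic condition, and it remains to show that $\sum_{S\ \mathrm{light}}\Pr\bigl[|\partial_R S|<\tau(s)\bigr]=o(1)$.

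For the single‑set probability, $|\partial_R S|\sim\Bin\bigl(s(n-s),\tfrac{\eps}{n}\bigr)$ has mean $\mu_S=\tfrac{\eps}{n}s(n-s)\ge\eps(1-\alpha)s$, and $\tau(s)\le\delta s\le\mu_S/2$ provided $\delta\le\eps(1-\alpha)/2$. A Chernoff bound then gives $\Pr[|\partial_R S|<\tau(s)]\le\Pr[|\partial_R S|\le\mu_S/2]\le e^{-\mu_S/8}\le e^{-cs}$, where $c:=\eps(1-\alpha)/8$.

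The number of light sets is where the tree is used. If $|\partial_T S|=t\ge 1$, then deleting the $t$ edges of $\partial_T S$ breaks $T$ into $t+1$ pieces; contracting each piece to a point yields a tree on $t+1$ vertices (no loops or parallel edges can appear, as $T$ has no cycles), and $S$ is a union of pieces such that every one of the $t$ edges is crossed, i.e.\ $S$ corresponds to a proper $2$‑colouring of this contracted tree, of which there are exactly two. Hence there are at most $2\binom{n-1}{t}$ sets $S$ with $|\partial_T S|=t$. Summing over $1\le t\le\tau(s)$ and using that $\binom{n-1}{t}\le(en/t)^t$ is increasing in $t$ for $1\le t<n$, the number of light sets of size $s$ is at most $2\tau(s)\bigl(en/\tau(s)\bigr)^{\tau(s)}$. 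Writing $L=\log(en/s)>1$, so that $\tau(s)=\delta s/L$ and $en/\tau(s)=e^{L}L/\delta$, one gets $\tau(s)\log\bigl(en/\tau(s)\bigr)=\delta s\bigl(1+\tfrac{\log L-\log\delta}{L}\bigr)\le\delta s\bigl(2+\log(1/\delta)\bigr)$, using $\log L/L\le 1/e$. Now fix $\delta=\delta(\eps,\alpha)>0$ small enough that $\delta\le\eps(1-\alpha)/2$ and $\delta\bigl(2+\log(1/\delta)\bigr)\le c/2$; then the contribution of size‑$s$ light sets to the sum is at most $2\tau(s)e^{\delta s(2+\log(1/\delta))}e^{-cs}\le 2s\,e^{-cs/2}$.

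Finally, a light set forces $\tau(s)>|\partial_T S|\ge 1$ by connectivity of $T$, which for large $n$ entails $s\ge(\log n)/(2\delta)$; hence the whole sum is at most $\sum_{s\ge(\log n)/(2\delta)}2s\,e^{-cs/2}$, which tends to $0$. This proves the main inequality, and the ``in particular'' statement follows by taking $\alpha=\tfrac12$ and using $\log(en/|U|)\le\log(en)$. I expect the main obstacle to be precisely the counting step in the intermediate regime, say $s=\Theta(n)$: a crude union bound over all $\binom{n}{s}$ sets of size $s$ is far too wasteful there, and it is essential that the spanning tree cuts the number of relevant sets down to roughly $(en/\tau(s))^{\tau(s)}$ and that the prescribed threshold carries the $\log(en/s)$ factor, so that a single constant $\delta$ (independent of $n$) defeats the Chernoff exponent $c$ uniformly over all $s$.
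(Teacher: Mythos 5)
Your proof is correct, but it takes a genuinely different route from the paper's. You reduce to a spanning tree $T$ (legitimate, since $\partial_{T\cup R}S\subseteq\partial_{\pG}S$) and count the sets with small \emph{edge} boundary in $T$ directly: deleting the $t$ edges of $\partial_T S$ and contracting the resulting components gives a tree on $t+1$ vertices, whose exactly two proper $2$-colourings identify $S$ or $S^c$, so there are exactly $2\binom{n-1}{t}$ sets with $|\partial_T S|=t$. The paper instead works in $G$ itself, decomposes a candidate set $S$ into its connected components in $G$, and counts via Proposition~\ref{prop:Cvab}, which bounds the number of connected sets with prescribed size and \emph{vertex} boundary by $\binom{a+b-1}{b}$; the resulting bound on the number of sets of size $s$ with boundary parameter $b$ is of the same shape $(O(n/b))^{O(b)}$ as yours, and the Chernoff step and the choice of $\delta$ with $\delta\log(1/\delta)$ small are essentially identical. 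Your counting argument is arguably more elementary and gives an exact count, but it is specific to edge expansion: passing to a spanning tree preserves lower bounds on edge boundaries but destroys connectivity information, so it cannot be reused for Theorem~\ref{thm:expansion-connected} (which restricts to sets connected in $\pG$ and feeds the diameter and mixing-time proofs), whereas Proposition~\ref{prop:Cvab} serves all of these at once. Your closing observation about where the difficulty lies --- that the union bound must beat the Chernoff exponent uniformly in $s$, including $s=\Theta(n)$, and that the $\log(en/s)$ factor in the threshold is exactly what makes a single constant $\delta$ suffice --- matches the role this plays in the paper's argument.
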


It should be noted that Theorem~\ref{thm:expansion} implies that a.a.s.\ the vertex expansion of $\pG$ is at least $\frac{\delta}{\Delta(\pG)\log(en)}$. This improves the bound obtained in Theorem~\ref{thm:tree} when $\Delta(G) \gg \left(\frac{\log n}{\log\log n}\right)^{1/3}$; to see this, observe that a.a.s.\ the maximum degree of $G(n,\eps/n)$ is $\Theta\left(\frac{\log n}{\log \log n}\right)$.


Furthermore, we prove an even stronger bound on the edge expansion of connected subsets of a perturbed connected graph.

\begin{thm}
  \label{thm:expansion-connected}
  For every $\eps > 0$ and $\alpha < 1$, there exist $\delta > 0$ and $K > 0$ such that the following holds. Let $G$ be an $n$-vertex connected graph, choose $R \sim G(n, \eps/n)$, and let $\pG = G \cup R$. Then a.a.s.\ for every connected (in $\pG$) set $S \subseteq V(G)$ with $K \log n \le |S| \le \alpha n$,
  \[
  |\partial_{\pG} S| \ge \delta |S|.
  \]
\end{thm}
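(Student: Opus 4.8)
The plan is a first‑moment estimate over the random edges, after first peeling off the large sets. Applying Theorem~\ref{thm:expansion} with the same $\alpha$, any $S$ with $|S|\ge \alpha_0 n$ (for a small constant $\alpha_0=\alpha_0(\eps)$ fixed later) already has $|\partial_{\pG}S|\ge \frac{\delta'}{\log(e/\alpha_0)}|S|\ge\delta|S|$ once $\delta$ is small enough; so it suffices to rule out a.a.s.\ the existence of a connected (in $\pG$) set $S$ with $K\log n\le |S|=s\le \alpha_0 n$ and $|\partial_{\pG}S|<\delta s$. For such an $S$: if $|\partial_G S|\ge\delta s$ we are done, so assume $|\partial_G S|<\delta s$. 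Since $G$ is connected, every component of $G[S]$ sends at least one edge out of $S$ in $G$, hence $G[S]$ has $c\le|\partial_G S|<\delta s$ components $C_1,\dots,C_c$; and since $S$ is connected in $\pG$ but $G[S]$ has $c$ components, at least $c-1$ edges of $R$ have both endpoints in $S$, and one may choose such edges $e_1,\dots,e_{c-1}$ joining the $C_i$ in a tree pattern.

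This yields a ``witness'' for each bad $S$: the (unordered, hence canonically labelled) family $C_1,\dots,C_c$ of components of $G[S]$, a tree $\tau$ on $[c]$, and for each $\{i,j\}\in\tau$ an edge $e_{ij}$ with one endpoint in $C_i$ and one in $C_j$. For a fixed witness, the event that all $e_{ij}$ lie in $R$ and that $|\partial_R S|<\delta s$ has probability at most $(\eps/n)^{c-1}\cdot\Pr[\Bin(s(n-s),\eps/n)<\delta s]$, because the two events concern disjoint sets of pairs; as $s\le\alpha_0 n\le n/2$ this binomial has mean at least $\tfrac12\eps s$, so for $\delta$ small enough in terms of $\eps$ the Chernoff bound makes this at most $(\eps/n)^{c-1}e^{-c_1 s}$ for some $c_1=c_1(\eps)>0$. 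Summing over $\tau$ and the $e_{ij}$'s and using the generalized Cayley formula $\sum_{\tau}\prod_{\{i,j\}\in\tau}|C_i||C_j|=s^{c-2}\prod_i|C_i|\le s^{c-2}(s/c)^c$, the bad probability is at most
\[
\sum_{K\log n\le s\le\alpha_0 n}\ e^{-c_1 s}\sum_{c<\delta s}(\eps/n)^{c-1}\,s^{c-2}(s/c)^c\cdot N_c(s),
\]
where $N_c(s)$ counts the sets $S$ of size $s$ with $|\partial_G S|<\delta s$ whose induced subgraph $G[S]$ has exactly $c$ components. The point of the whole setup is that when $c$ is large the factor $(\eps/n)^{c-1}$ carries a large power of $n$ in the denominator, and this is exactly what is needed to cancel the unavoidable $n^{\Theta(c)}$ hidden in $N_c(s)$ — the cost of choosing $c$ separated components anywhere in $G$.

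To make the sum converge one needs a bound $N_c(s)\le \frac{1}{c!}n^{c}\cdot C(\delta)^s$ with $C(\delta)\to1$ as $\delta\to0$ (the $1/c!$ because the components are unordered; and no factor such as $2^s$ or $s^{\Omega(\delta s)}$ that would survive into the $s=\Theta(\log n)$ regime). Granting such a bound, $(\eps/n)^{c-1}N_c(s)\le \frac{n\,\eps^{c-1}}{c!}C(\delta)^s$, while $\frac{s^{c-2}(s/c)^c}{c!}$ simplifies (the powers of $s$ in the numerator cancelling against the powers of $c$ coming from $(s/c)^c$ and from $c!$) to something of the form $\frac{1}{s^2}e^{g(\delta)s}$ with $g(\delta)\to0$; the remaining $\sum_{c<\delta s}\eps^{c-1}$ and the product with $e^{-c_1 s}$ are then controlled once $\delta$ is small enough in terms of $\eps$ that $g(\delta)+\log C(\delta)<c_1$, after which the outer sum over $s\ge K\log n$ is $O(n^{1-c_1 K})\to0$ for $K$ large. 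I would prove the bound on $N_c(s)$ by decomposing $S$ into its components, each of which is a connected subset of $G$ with $\sum_i|\partial_G C_i|=|\partial_G S|<\delta s$, and invoking the key lemma below for each.

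The main obstacle is the key lemma: in any connected $n$-vertex graph $G$, the number of connected subsets $C$ with $|C|=m$ and $|\partial_G C|\le \delta m$ is at most $n\cdot C_0(\delta)^m$ with $C_0(\delta)\to1$ as $\delta\to0$. This is where connectedness of $G$ is used. The natural route is to fix a spanning tree $\Gamma$ of $G$, observe that $|\partial_\Gamma C'|\le|\partial_G C|$ for the components $C'$ of $\Gamma[C]$ and that any subtree $D$ of $\Gamma$ satisfies $\sum_{v\in D}\deg_\Gamma(v)=2(|D|-1)+|\partial_\Gamma D|$, so its vertices have average $\Gamma$-degree arbitrarily close to $2$ — i.e.\ $D$ is ``path-like'' — and then encode $D$ by its topmost vertex relative to a fixed root of $\Gamma$ together with the set of ``pruned'' children (of size $|\partial_\Gamma D|$), arguing that this pruning data costs only $O(|\partial_\Gamma D|\log(1/\delta))$ bits beyond specifying $|D|$. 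Pushing this encoding through so that the final count is polynomial in $n$ when $m=\Theta(\log n)$, and reassembling the per-component estimates — including the subtle point that $\Gamma[C]$ may have more components than $G[C]$, and the small components of $G[S]$, which may have large boundary relative to their size — into the clean bound on $N_c(s)$, is the delicate part; everything downstream is a routine convergent-series computation.
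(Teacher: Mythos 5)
Your overall architecture is exactly the paper's: witness a bad connected set $S$ by the components of $G[S]$ together with a spanning tree of $c-1$ random edges joining them, pay $(\eps/n)^{c-1}$ for those edges (independently of the event $|\partial_R S|<\delta s$, since disjoint pairs are involved), and use that factor to cancel the $n^{c}$ cost of placing $c$ component roots. The tree-sum via the weighted Cayley formula is a clean substitute for the paper's bookkeeping with outdegree sequences, multisets and a permutation, and the reduction of the range $|S|\ge\alpha_0 n$ to Theorem~\ref{thm:expansion} is fine. The problem is that everything hinges on the counting bound you defer, and the route you sketch for it does not go through.

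Concretely, your key lemma (at most $n\cdot C_0(\delta)^m$ connected sets of size $m$ with $|\partial_G C|\le\delta m$, with $C_0(\delta)\to 1$) is true, but the spanning-tree encoding has two unresolved obstructions that you flag but cannot treat as routine. First, $\Gamma[C]$ can have up to $|\partial_\Gamma C|\le\delta m$ components, and to reconstruct $C$ you must locate each of them; paying $n$ per component costs $n^{\delta m}$, which for $m=\Theta(\log n)$ is $n^{\Theta(\delta\log n)}$ and swamps the available $e^{-c_1 s}=n^{-c_1K}$. The natural repair --- navigating between $\Gamma$-components along edges of the connected graph $G[C]$ --- costs $\log(\deg v)$ bits per jump, again $\Theta(\log n)$ per component when degrees are unbounded, so the same super-polynomial loss reappears. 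Second, even granting the lemma, it does not apply to the individual components $C_i$ of $G[S]$: only the \emph{total} boundary is at most $\delta s$, and a single small component may have boundary far exceeding $\delta|C_i|$, so you need a count parameterized jointly by $|C_i|$ and $|\partial_G C_i|$ and a convolution over the two partitions. The paper resolves both issues at once with Proposition~\ref{prop:Cvab}: the number of connected sets $A\ni v$ with $|A|=a$ and $|N(A)|=b$ is at most $\binom{a+b-1}{b}$, proved by exploring $G$ itself (not a spanning tree) and recording one bit per explored vertex according to whether it lands in $A$ or in $N(A)$; this never pays for vertex degrees, handles each component at its own $(s_i,b_i)$, and the products recombine via $\binom{x}{y}\binom{w}{z}\le\binom{x+w}{y+z}$. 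Until you supply a proof of the counting lemma at this level of precision, the argument is incomplete at its central step.
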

We consider sets of size at most $\alpha n$ for an arbitrary $\alpha<1$ (instead of restricting our attention to sets of size at most $n/2$, as is customary in dealing with edge expansion) since this allows us later to give upper bounds on the conductance of sets of volume up to a half of the total volume which is crucial for the proof of Theorem~\ref{thm:main} stated below. Here volume is measured in terms of the degree sum rather than the number of vertices.

Using Theorem~\ref{thm:expansion-connected}, we derive the following upper bound on the diameter of a randomly perturbed connected graph. Observe that the diameter of a (non-perturbed) $n$-vertex connected graph may be as high as $n-1$ (when the graph is a path on $n$ vertices).

\begin{thm}
  \label{thm:diameter}
  For every $\eps > 0$, there exists $C > 0$ such that the following holds. Let $G$ be an $n$-vertex connected graph, choose $R \sim G(n,\frac{\eps}{n})$, and let $\pG = G \cup R$. Then a.a.s.\ the diameter of $\pG$ is at most $C \log n$.
\end{thm}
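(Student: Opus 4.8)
The plan is to run breadth-first search from each vertex and show that a.a.s.\ every ball of radius $O(\log n)$ already contains more than half the vertices of $\pG$; since any two vertex sets of size more than $n/2$ intersect, this gives $\mathrm{dist}_{\pG}(u,v)\le C\log n$ for all $u,v$. Write $B^t(v)$ for the ball of radius $t$ around $v$ in $\pG$, and let $K$ and $\delta$ be the constants supplied by Theorem~\ref{thm:expansion-connected} applied with $\alpha=9/10$. We analyse the growth of $B^t(v)$ in two regimes. While $|B^t(v)|<K\log n$ we use only that $G$, and hence $\pG$, is connected: a ball that is not all of $V$ is strictly contained in the next ball, so $|B^{K\log n}(v)|\ge\min\{K\log n+1,n\}$ for every $v$, deterministically. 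The heart of the proof is the second regime: we claim that a.a.s., for every $v$ and every $t$ with $s:=|B^t(v)|\in[K\log n,\tfrac9{10}n]$, one has $|B^{t+c_0}(v)|\ge\min\{(1+\gamma)s,\tfrac9{10}n\}$ for constants $c_0\in\mathbb N$ and $\gamma>0$ depending only on $\eps$. Granting this, every ball exceeds size $n/2$ within $K\log n+c_0\log_{1+\gamma}n=O(\log n)$ steps, which is exactly what is needed.

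To establish the claim, fix $v$ and $t$, set $B=B^t(v)$, and let $F_1,F_2,\dots$ denote the successive BFS layers beyond $B$. We may assume $|B^{t+j}(v)|\le\alpha n$ for all $0\le j\le c_0$, since otherwise some $B^{t+j}(v)$ already has more than $\tfrac9{10}n>\tfrac n2$ vertices and the claim is trivial. Then Theorem~\ref{thm:expansion-connected} applies to each of $B,B^{t+1}(v),\dots,B^{t+c_0}(v)$, and since a vertex of $B^{t+j-1}(v)$ has no $\pG$-neighbour outside $B^{t+j}(v)$, all boundary edges of $B^{t+j}(v)$ run between $F_j$ and $F_{j+1}$; hence $|F_j|\cdot|F_{j+1}|\ge e_{\pG}(F_j,F_{j+1})=|\partial_{\pG}B^{t+j}(v)|\ge\delta|B^{t+j}(v)|\ge\delta s$ for every $j\ge1$. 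Consequently, if $|F_1|\ge\gamma s$ the ball grew by a factor $1+\gamma$ in a single step, and if $|F_j|\le\delta/\gamma$ for some small $j$ then $|F_{j+1}|\ge\gamma s$ and we are done one step later. The remaining, and genuinely hard, case is that every layer has intermediate size, $\delta/\gamma<|F_j|<\gamma s$; here the inequalities $|F_j||F_{j+1}|\ge\delta s$ by themselves only force $\Theta(\sqrt s)$-step growth, which is far too slow.

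The intermediate case is where the randomness of $R$ has to be used on top of the \emph{conclusion} of Theorem~\ref{thm:expansion-connected}. The intuition is that a ball on at least $K\log n$ vertices emits $\Omega(\log n)$ random edges to essentially uniformly chosen vertices, from which the BFS explodes. To make this rigorous, one reserves an independent slice $R_2\sim G(n,\eps_2/n)$ of the perturbation, so that $\pG=(G\cup R_1)\cup R_2$, and argues conditionally on $R_1$: there are at most $n^2$ candidate balls $S$ (one for each pair $(v,t)$), and for a fixed such $S$ with $K\log n\le|S|\le\alpha n$ the number of vertices outside $S$ joined to $S$ by an $R_2$-edge stochastically dominates $\Bin\big(n-|S|,\,1-(1-\eps_2/n)^{|S|}\big)$, whose mean is $\Omega(|S|)$ since $|S|\le\alpha n$. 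A Chernoff bound gives failure probability $e^{-\Omega(|S|)}\le n^{-3}$ once $K$ is chosen large enough, and a union bound over the $n^2$ candidates shows that a.a.s.\ every such ball acquires $\Omega(s)$ new vertices. Combining this $R_2$-boost with the layer inequalities over $c_0=O(1)$ steps then yields the required multiplicative growth in all cases.

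\emph{The main obstacle.} Everything hinges on the second regime. Theorem~\ref{thm:expansion-connected} bounds only the \emph{edge} boundary of a ball, and two difficulties block the passage to geometric growth of the ball's vertex set: (i) when $G$ has high-degree vertices, the edge boundary of a ball may be carried by very few new vertices, so a priori the ball can grow slowly while merely accumulating edges rather than vertices; and (ii) a BFS ball is itself a function of the random edges one would like to treat as fresh. Difficulty (i) is only partially defused by the inequality $|F_j||F_{j+1}|\ge\delta s$ --- it rules out a \emph{tiny} layer but not an \emph{intermediate} one --- so the intermediate-layer case genuinely requires an input beyond Theorem~\ref{thm:expansion-connected}; difficulty (ii) is what forces the sprinkling/conditioning device above. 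Fitting the constants $K$, $c_0$, $\gamma$ together, and verifying that the cases ``large first layer'', ``some tiny layer'', and ``all layers intermediate'' really exhaust the possibilities at every step, is where the bulk of the technical work lies.
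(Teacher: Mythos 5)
Your overall strategy (geometric growth of BFS balls, two balls of size more than $n/2$ must meet) matches the paper's, and your diagnosis of the central difficulty is exactly right: Theorem~\ref{thm:expansion-connected} controls the \emph{edge} boundary of a ball, and those $\delta s$ edges may land on very few new vertices. But your proposed resolution does not close the gap. The layer inequality $|F_j|\,|F_{j+1}|\ge\delta s$ is correct yet, as you concede, leaves the ``all layers intermediate'' case open, and the sprinkling device you sketch to handle it has a real flaw: after splitting $R=R_1\cup R_2$ and conditioning on $R_1$, the ``at most $n^2$ candidate balls'' are the balls of $G\cup R_1$, whereas the sets whose growth you need to certify are the balls $B^t(v)$ of $\pG$, which depend on $R_2$. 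A union bound over the $n^2$ balls of $G\cup R_1$ says nothing about $N_{R_2}(B^t_{\pG}(v))$, because $B^t_{\pG}(v)$ is not one of those candidates; and a union bound over \emph{all} connected sets of size $s$ is hopeless for base graphs with large degrees (a star has $\binom{n-1}{s-1}$ of them through its centre). So the argument as written does not yield the multiplicative growth claim, and you yourself flag that ``the bulk of the technical work'' remains. As it stands this is a proof outline with the hardest step unproved.

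The paper's proof avoids all of this with one device you are missing: measure the ball by its number of \emph{edges} rather than vertices. First reduce to $G$ a spanning tree and $\eps\le 1/3$ (adding edges only shrinks the diameter), so that by Lemma~\ref{lemma:eS} a.a.s.\ every connected set $S$ of $\pG$ satisfies $e(S)/3\le|S|\le e(S)+1$. Now every boundary edge of $B(v,r)$ becomes an internal edge of $B(v,r+1)$, so Theorem~\ref{thm:expansion-connected} gives
\[
e\big(B(v,r+1)\big)\;\ge\; e\big(B(v,r)\big)+\delta\,|B(v,r)|\;\ge\;\Big(1+\tfrac{\delta}{3}\Big)\,e\big(B(v,r)\big)
\]
for $r\ge K\log n$ (as long as the ball has at most $\alpha n$ vertices). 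It is irrelevant how few new vertices carry those boundary edges: the edge count grows geometrically regardless, and the sparsity bound $|S|\ge e(S)/3$ converts this back into the statement that after $O(\log n)$ steps the ball contains more than half of the edges of $\pG$ (the paper then intersects balls that each contain more than half the \emph{edges}, rather than half the vertices). This single observation eliminates both your intermediate-layer case and the need for sprinkling. I recommend reworking your second regime along these lines.
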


Flaxman and Frieze~\cite{FlaxmanFrieze} proved an upper bound of $O(\log n)$ on the diameter of randomly perturbed strongly connected \emph{digraphs}. The result of~\cite{FlaxmanFrieze} requires that the maximum degree of the base graph is upper bounded by some function of $n$ (see Section~\ref{sec:related-works} for details). Unlike their work, our upper bound on the diameter of $\pG$ holds unconditionally, regardless of the maximum degree of the base graph $G$.

Using Theorem~\ref{thm:expansion-connected}, we also prove upper bounds on the mixing times of lazy random walks on randomly perturbed connected graphs. Recall the notion of \emph{degeneracy}. Given a positive integer $D$, a graph $G$ is called \emph{$D$-degenerate} if every subgraph of $G$ contains a vertex of degree at most $D$. Observe that every graph $G$ is $\Delta(G)$-degenerate and trees are $1$-degenerate. Also, if $G$ is $D$-degenerate, then every subset $S \subseteq V(G)$ spans at most $D|S|$ edges. Using the machinery developed by Fountoulakis and Reed~\cite{FoRe07}, we are able to prove the following bound on the mixing time of the lazy random walk on a random perturbation of a connected graph with bounded degeneracy.

\begin{thm}
  \label{thm:main}
  For all positive $D$ and $\eps$, there exists a constant $M$ such that the following holds. Let $G$ be an $n$-vertex $D$-degenerate connected graph, choose $R \sim G(n, \frac{\eps}{n})$ and let $\pG = G \cup R$. Then a.a.s.
  \[
  \Tm(\pG) \le M \log^2 n.
  \]
\end{thm}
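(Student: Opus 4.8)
The plan is to invoke the Fountoulakis–Reed machinery, which bounds the mixing time of a lazy random walk in terms of the conductances of level sets arising from a "decomposition" of the vertex set into pieces of geometrically decreasing volume. Concretely, if for every subset $S$ of volume (degree sum) at most half the total volume one has conductance $\Phi(S) \ge \phi(\mathrm{vol}(S))$ for some suitable decreasing function $\phi$, then $\Tm(\pG) = O\left(\sum_{i} \phi(2^i)^{-2}\right)$ where $i$ ranges over the dyadic scales up to $\log_2(\mathrm{vol}(\pG))$. So the entire task reduces to proving a good lower bound on the conductance $\Phi(S) = |\partial_{\pG} S| / \mathrm{vol}_{\pG}(S)$ for all $S$, and then summing the resulting series.

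First I would record the easy structural facts. Since $G$ is $D$-degenerate and $R \sim G(n,\eps/n)$ a.a.s.\ has maximum degree $O(\log n / \log\log n)$ and is itself $O(1)$-degenerate on every linear-sized set (indeed a.a.s.\ $\pG$ is $(D + O(\log n/\log\log n))$-degenerate, and more usefully every set of size $\le \alpha n$ spans at most $(D+C)|S|$ edges for an absolute constant $C$), the total volume of $\pG$ is $\Theta(n)$ and $\mathrm{vol}_{\pG}(S) = \Theta(|S|)$ whenever $|S|$ is, say, at least $\log n$; for smaller sets we just need $\mathrm{vol}_{\pG}(S) = O(|S| \log n / \log\log n)$. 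Next I would split the conductance bound into three regimes according to $|S|$. For $\log^2 n \le |S| \le \alpha n$ with $\alpha$ slightly below $1/2$ (chosen so that volume $\le$ half corresponds to vertex count $\le \alpha n$, using degeneracy to control the volume), I would \emph{not} use Theorem \ref{thm:expansion-connected} directly since $S$ need not be connected; instead I would decompose $S$ into its connected components in $\pG$, apply Theorem \ref{thm:expansion-connected} to each component of size $\ge K\log n$ and Theorem \ref{thm:expansion} (or the trivial bound plus connectivity of $G$) to the smaller components, and combine: the edge boundary of $S$ is at least the sum of the boundaries of the components minus twice the number of edges between components, but since $G$ is connected this is handled by noting each component of $S$ sends an edge of $G$ out of $S$ unless it is all of a $G$-component — and here I would actually find it cleanest to use Theorem \ref{thm:expansion} as a black box for the whole set $S$ directly, giving $|\partial_{\pG} S| \ge \frac{\delta}{\log(en/|S|)}|S|$, hence $\Phi(S) = \Omega\left(\frac{1}{\log(en/|S|)}\right)$. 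For $K\log n \le |S| \le \log^2 n$, Theorem \ref{thm:expansion} already gives $\Phi(S) = \Omega(1/\log n)$. For $|S| < K \log n$ we fall below the threshold and instead use that $\pG \supseteq G$ is connected so $|\partial_{\pG} S| \ge 1$, giving $\Phi(S) = \Omega(1/(|S|\log n))$, which is the weakest regime.

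Then I would feed these bounds into the Fountoulakis–Reed bound. Writing $v = \mathrm{vol}_{\pG}(S)$, the conductance lower bound reads roughly $\phi(v) = \Omega(1/\log n)$ for $v$ from some $\Theta(\log n)$-ish threshold up to $\Theta(n)$, and $\phi(v) = \Omega(1/(v \log n))$ below that. Their theorem then yields
\[
\Tm(\pG) = O\!\left( \sum_{v \text{ dyadic}, \ v \ge \log n} \log^2 n \ + \ \sum_{v \text{ dyadic}, \ v < \log n} v^2 \log^2 n \right) = O(\log n \cdot \log^2 n) \ \text{?}
\]
— and here is the point where I must be careful: a naive sum of $\log^2 n$ over all $O(\log n)$ dyadic scales gives $\log^3 n$, which is too weak. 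The fix, and the genuine subtlety, is that Fountoulakis–Reed do not sum a uniform bound over all scales; their estimate is $\sum_i \Phi_i^{-2}$ where $\Phi_i$ is the conductance of the $i$-th piece of a \emph{specific} decomposition, and for the lower-order scales one does better by using the stronger, size-dependent conductance bound. Concretely, for scales with volume $\Theta(2^i)$, if $2^i \ge K \log n$ I should be using the $\alpha<1$ version more cleverly: Theorem \ref{thm:expansion} gives conductance $\Omega(1/\log(en/2^i))$, and $\sum_i \log^2(en/2^i)$ over $i$ up to $\log_2 n$ is dominated by its largest terms — the terms with $2^i = \Theta(\log n)$ contribute $\log^2(n/\log n) = \Theta(\log^2 n)$ each, and there are only $O(\log\log n)$ such "bad" scales, while the bulk of scales have $2^i$ a constant fraction of $n$ and contribute $O(1)$ each. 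Summing: $O(\log^2 n \cdot \log\log n)$ from the moderate scales plus $O(\log^2 n)$ from the tiny scales (where $v^2 \le (K\log n)^2$). This still has a stray $\log\log n$.

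To remove it and hit exactly $O(\log^2 n)$ I expect the resolution is that Theorem \ref{thm:expansion-connected} — conductance $\Omega(1)$, not $\Omega(1/\log n)$ — applies to \emph{connected} sets, and in the Fountoulakis–Reed decomposition the relevant sets that actually arise can be taken connected (or one works with a variant, as they do, where one only needs the isoperimetric bound for sets that are unions of few "cells"). Using $\Phi = \Omega(1)$ for all scales $2^i \ge K\log n$ collapses their contribution to $\sum_i O(1) = O(\log n)$, and the only scales where one pays $\log^2 n$ are the $O(\log\log n)$ scales with $\log n \le 2^i \le K \log n$ plus the tiny scales with $2^i < K\log n$ contributing $\sum O((2^i)^2) = O(\log^2 n)$. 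So the final bound is $\Tm(\pG) = O(\log^2 n)$, with the $\log^2 n$ coming entirely from the smallest sets, where connectivity of $G$ forces $|\partial_{\pG} S| \ge 1$ and no better. The main obstacle, then, is the bookkeeping in the Fountoulakis–Reed framework: one must check that their decomposition interacts correctly with our three-regime conductance bounds and, in particular, that Theorem \ref{thm:expansion-connected}'s restriction to connected sets is compatible with the sets their argument produces — this is exactly why the problem statement emphasized bounding conductance for volume up to half the total, and why $D$-degeneracy (controlling volume versus vertex count, and controlling edges inside small sets) is hypothesized.
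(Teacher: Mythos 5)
Your final resolution is essentially the paper's own proof: the Fountoulakis--Reed bound is applied only to \emph{connected} sets, Theorem~\ref{thm:expansion-connected} then gives constant conductance for every connected set of size at least $K\log n$ (with $D$-degeneracy converting ``volume at most half'' into ``cardinality at most $\alpha n$''), and the scales below $K\log n$ are handled by $|\partial_{\pG}S|\ge 1$ plus a geometric sum, which contributes the dominant $O(\log^2 n)$. One small correction: a connected set of volume at most half the total can contain up to roughly $\frac{D^*n}{D^*+1}$ vertices (low-degree vertices make cardinality exceed the volume fraction), so the relevant $\alpha$ is close to $1$ rather than ``slightly below $1/2$'' --- which, as you note at the end, is exactly why Theorem~\ref{thm:expansion-connected} is stated for arbitrary $\alpha<1$.
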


For a precise definition of $\Tm$, we refer the reader to Section~\ref{sec:preliminaries}. The bound in Theorem~\ref{thm:main} above is tight when $G$ is the path on $n$ vertices, as then a.a.s.\ $\pG$ contains an induced subgraph which is a path of length $\Omega(\log n)$. Moreover, we cannot expect that $\Tm(\pG) = O(\log^2 n)$ for an arbitrary connected graph $G$, as the following example demonstrates. Let $G$ be the graph obtained by connecting two disjoint cliques of order $n/2$ with a single edge and let $R \sim G(n,\frac{1}{n})$. As the number of edges interconnecting the two cliques in the perturbed graph is a.a.s.\ $O(n)$, the conductance of $\pG$ is $O(\frac{1}{n})$, which implies via standard results (e.g., \cite{Peres}) that the mixing time of the lazy random walk on $\pG$ is $\Omega(n)$.

The effect of small random perturbations on connected graphs from several families has been studied before, see, e.g., \cite{Addario,NewmanWatts1}. In particular, a $O(\log^2n)$ bound (holding a.a.s.) on the mixing time of a simple random walk on a random perturbation of the ring graph was proved in~\cite{Addario}, see Section~\ref{sec:related-works} for more details. Our Theorem~\ref{thm:main} demonstrates that an upper bound of $O(\log^2n)$ on the mixing time (holding a.a.s.) is a rather general phenomenon for perturbed connected graphs.

Finally, we establish the existence of long paths in perturbed connected graphs with bounded maximum degree. Observe that a connected bounded degree graph with $n$ vertices might contain only paths of length $O(\log n)$, as the case of the complete binary tree demonstrates.
\begin{thm}
  \label{thm:long}
  For every $\eps, \Delta > 0$, there exists $c>0$ such that the following holds. Let $G$ be an $n$-vertex connected graph with maximum degree bounded by $\Delta$. Form a random graph $R \sim G(n,\frac{\eps}{n})$ and let $\pG = G \cup R$. Then $\pG$ a.a.s.\ contains a path of length $cn$.
\end{thm}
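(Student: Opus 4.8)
The plan is to reduce the problem to finding a long path in a sparse random graph on a linear number of ``super-vertices''. Since $G$ is connected, fix a spanning tree $T$ of $G$; note $\Delta(T)\le\Delta$. We partition $V(G)$ into connected parts $V_1,\dots,V_m$, each $V_i$ inducing a subtree of $T$, with $K\le|V_i|\le 2\Delta K$ for all $i$, where $K=K(\eps,\Delta)$ is a large constant chosen later; hence $m\ge n/(2\Delta K)=\Omega(n)$. Define the random cluster graph $H$ on vertex set $[m]$ by $ij\in E(H)$ whenever $R$ contains at least one edge between $V_i$ and $V_j$. The crucial observation is that $\pG$ contains a path of length $\ell$ whenever $H$ contains a path on $\ell+1$ vertices: given a path $V_{i_1},\dots,V_{i_{\ell+1}}$ in $H$, pick for each consecutive pair a witnessing edge $u_jw_{j+1}\in R$ with $u_j\in V_{i_j}$ and $w_{j+1}\in V_{i_{j+1}}$, and inside each $V_{i_j}$ pick an arbitrary simple path of $G[V_{i_j}]$ from the entry vertex $w_j$ to the exit vertex $u_j$ — such a path exists because $V_{i_j}$ induces a connected graph, and it may well have length zero. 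Concatenating these pieces gives a path in $\pG$ meeting all $\ell+1$ parts, and it is simple because the parts are pairwise disjoint and each internal piece is itself simple. The point is that inside a cluster we only need connectivity, never a spanning path (which need not exist).

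For the clustering lemma, root $T$ and repeatedly find a vertex $v$ whose rooted subtree $T_v$ has at least $K$ vertices while each of $v$'s children's subtrees has fewer than $K$ vertices (walk down from the root always into a child whose subtree still has $\ge K$ vertices); then $K\le|T_v|\le 1+\Delta(K-1)\le\Delta K$. Remove $T_v$ as a part and recurse on the remaining tree $T\setminus T_v$ as long as it has at least $K$ vertices. When fewer than $K$ vertices remain, they form a subtree adjacent to some already-removed part, into which we merge them, enlarging that part to at most $\Delta K+K\le 2\Delta K$ vertices. This yields the desired partition into connected parts of sizes in $[K,2\Delta K]$.

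Next I analyse $H$. For distinct pairs $\{V_i,V_j\}$ the relevant potential edges of $R$ are disjoint, so the events $\{ij\in E(H)\}$ are mutually independent, and $\Pr[ij\in E(H)]=1-(1-\eps/n)^{|V_i||V_j|}\ge 1-e^{-\eps K^2/n}\ge \eps K^2/(2n)$ for $n$ large (using $|V_i|,|V_j|\ge K$ with $K$ fixed). Since $m\le n/K$, this is at least $(\eps K/2)\cdot(1/m)=:d/m$ with $d:=\eps K/2$, which becomes an arbitrarily large constant once $K$ is large. Thus $H$ stochastically dominates $G(m,d/m)$, and since ``containing a path on $\ge\rho m$ vertices'' is a monotone property, it suffices to invoke the classical fact that for $d$ large enough $G(m,d/m)$ a.a.s.\ contains a path on at least $\rho(d)m$ vertices for some $\rho(d)>0$ (with $\rho(d)\to1$ as $d\to\infty$) — provable, for instance, via the DFS/stack argument of Krivelevich and Sudakov or via the longest-path results of Ajtai, Koml\'os and Szemer\'edi. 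Combining this with the first paragraph, a.a.s.\ $H$ has a path on $\ge\rho m$ vertices, which lifts to a path of length $\ge\rho m-1\ge\rho n/(2\Delta K)-1\ge cn$ in $\pG$ for a suitable $c=c(\eps,\Delta)>0$ and all large $n$.

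I expect the bookkeeping here to be routine. The only genuinely delicate point is the one flagged in the first paragraph: guaranteeing that the lift of a path in $H$ is a \emph{simple} path in $\pG$, by routing through clusters using only their connectivity rather than demanding spanning paths, together with fixing the cluster size $K$ large enough that $H$ is supercritical by a wide margin. The clustering lemma, the independence of the block events, the stochastic domination, and the long-path property of supercritical $G(m,d/m)$ are all standard.
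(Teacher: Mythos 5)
Your proposal is correct and follows essentially the same route as the paper: decompose $G$ into connected blobs of constant size $\Theta(K)$, observe that the auxiliary blob graph dominates a supercritical $G(m,d/m)$, invoke the classical long-path result for sparse random graphs, and lift the path to $\pG$ using the connectivity of each blob. Your write-up is somewhat more detailed on the decomposition lemma (which the paper cites) and on the simplicity of the lifted path, but the argument is the same.
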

The assumption that the maximum degree is bounded is crucial, as it is easy to see that if $G = K_{1,n-1}$ and $\eps < 1$, then a.a.s.\ the length of a longest path in $G \cup R$ is $O(\log n)$. This follows as it is known that a.a.s.\ each connected component of $G(n,\frac{\eps}{n})$ has $O(\log n)$ vertices and the vertex set of any simple path in $\pG$ intersects at most two connected components in $R$.

Finally, one may ask what happens if one incorporates edge deletions in our model. Consider the case when $G$ is an $n$-vertex tree with $\Omega(n)$ leaves, e.g., $G$ is a complete binary tree over $n$ vertices. If we now add and remove edges randomly with probability $\frac{\eps}{n}$, then with constant probability, we will isolate one of the leaves of $G$ (as the probability of isolating a fixed vertex with degree one in $G$ is about $\frac{\eps}{n} \cdot e^{-\eps})$. Hence we cannot expect the resulting after perturbed graph to have nontrivial expansion properties.

\subsection{Our techniques}

In proving Theorems~\ref{thm:tree} and~\ref{thm:long}, we use a fairly basic result  (see e.g., \cite{KN}) to decompose graph of bounded degree to disjoint connected sets of comparable sizes. Treating each of these sets as a `super-vertex' allows us to view the auxiliary graph induced by the random edges between sets as essentially the standard binomial random graph whose edge probability should be now compared to the number of super-vertices as opposed to the (much larger) number of vertices. Consequently, standard methods and results regarding the threshold for connectivity and the existence of long paths in binomial random graphs can be used.

In order to deal with the Cheeger constant of perturbed graphs, we derive an upper bound on the number of connected subsets of given cardinality and number of vertices in their boundary. Recall that a subset of vertices of a graph is \emph{connected}, if it induces a connected subgraph.

\begin{prop}
  \label{prop:Cvab}
  Let $G$ be an arbitrary graph and let $v \in V(G)$. For integers $a$ and $b$, let $\cC(v, a, b)$ denote the collection of connected subsets $A$ of $V(G)$ such that $v \in A$, $|A| = a$, and $|N(A)| = b$. Then
  \[
  |\cC(v, a, b)| \le \binom{a+b-1}{b}.
  \]
\end{prop}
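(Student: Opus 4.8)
The plan is to fix a global ordering $v = v_1 < v_2 < \dots < v_n$ of $V(G)$ and construct an injection from $\cC(v,a,b)$ into the set of binary strings of length $a+b-1$ that contain exactly $b$ ones. Since there are exactly $\binom{a+b-1}{b}$ such strings, this gives the bound. The string encoding a set $A$ will be produced by running a depth-first search of $G[A]$ from $v$ and recording the boundary vertices as they are met.

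Concretely, given $A \in \cC(v,a,b)$ with $N(A) = B$, I would run the following DFS, which is allowed to descend only into vertices of $A$. Maintain a set of \emph{visited} vertices, initialized to $\{v\}$. Whenever the search is located at a vertex $u$, it scans the $G$-neighbours of $u$ in increasing order; for each neighbour $w$ that has not yet been visited it appends one symbol to the string: a $0$ if $w \in A$ — in which case it marks $w$ visited and recurses into $w$ (finishing $w$'s subtree before returning to $u$) — and a $1$ if $w \in B$, in which case it marks $w$ visited and proceeds to the next neighbour of $u$; already-visited neighbours produce no symbol. The search descends into each of the $a-1$ vertices of $A \setminus \{v\}$ exactly once, so the string has exactly $a-1$ zeros; and since a vertex of $B$ is never descended into and becomes visited precisely when it is first scanned, each of the $b$ vertices of $B$ contributes exactly one $1$. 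Hence the string has length $a+b-1$ and exactly $b$ ones, as required.

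Injectivity follows because the procedure is reversible: from the string $\sigma$ alone one can replay the search, since the order in which neighbours are scanned is dictated by the fixed global ordering together with the current set of visited vertices, and whenever an unvisited neighbour is encountered the next symbol of $\sigma$ determines whether to treat it as a vertex of $A$ (and recurse) or as a vertex of $B$. This defines a decoding map on strings, and by construction decoding the string obtained from $A$ returns $A$; therefore distinct sets in $\cC(v,a,b)$ produce distinct strings, and $|\cC(v,a,b)| \le \binom{a+b-1}{b}$.

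The only real point requiring care is the bookkeeping underlying the previous paragraph: one must check that at the moment a boundary vertex is first scanned it is indeed still unvisited — which holds because the sole mechanism that marks a vertex of $B$ as visited is its being recorded — and that the encoder and the decoder scan vertices in exactly the same order, so that the decoder faithfully retraces the encoder. Both are routine once the loop invariants are stated precisely. Note that no assumption on the maximum degree or any other structural property of $G$ enters the argument.
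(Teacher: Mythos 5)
Your proof is correct and takes essentially the same approach as the paper: both encode each $A \in \cC(v,a,b)$ injectively as a binary string of length $a+b-1$ with exactly $b$ symbols of one type, by running a canonical, reversible exploration of $G[A]$ starting from $v$ that emits one bit the first time each vertex of $(A \cup N(A)) \setminus \{v\}$ is revealed. The only (immaterial) difference is the exploration order — you use DFS scanning neighbours in label order, while the paper always resolves the smallest-labelled vertex of the current frontier $N(S) \setminus B$ — and both orders are deterministic and hence reconstructible from the string.
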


The bound in Proposition~\ref{prop:Cvab} is tight for all values of $a$ and $b$. To see this, consider the case when $G = K_{1, a+b-1}$ and $v$ is the center vertex.

In bounding the mixing time, we rely on an upper bound on the mixing time of a lazy random walk due to Fountoulakis and Reed~\cite{FoRe07}. This bound, which they used~\cite{FoRe08} to upper bound the mixing time of the lazy random walk on the giant component of $G(n,p)$, is suited for bounding the mixing time of random walks on graphs whose large vertex sets expand well but small sets (e.g., of logarithmic size) do not have to. Another attractive feature of the result of Fountoulakis and Reed is that it allows one to focus on the conductance of connected sets, which significantly simplifies union bound estimates. We note that the classical work of Jerrum and Sinclair \cite{JS} for upper-bounding the mixing time $\Tm$ in terms of the conductance $\Phi$ of $G$ (see Section~\ref{sec:preliminaries} for precise definitions), namely
\[
\Tm \le O\left(\frac{\log n}{\Phi^2}\right)
\]
would give in our setting a weaker bound of $O(\log ^3 n)$.

\subsection{Related work}
\label{sec:related-works}

The study of random perturbations of graphs arose in several contexts. One of them is the field of \emph{smoothed analysis}, which originated from the work of Spielman and Teng~\cite{SpTe} on the smoothed complexity of the simplex algorithm. This field attempts to provide a theoretical explanation for the good performance of certain heuristics on ``real-life'' instances based on the assumption that they are likely to be subjected to random perturbations. It has been applied to a host of other problems such as numerical analysis and linear algebra~\cite{Sankar,Tao}, machine learning~\cite{blumd}, and satisfiability~\cite{COFeKrVi, feige}. It is closely related to the study of random perturbations of combinatorial structures and devising efficient algorithms for such ``semi-random'' instances, which had been considered in the past, see~\cite{Blum, BF, FeigeKilian, Flaxman, KST, Spencer, SuVo}.

Another context where the study of random perturbations naturally arose, is the field of small world networks, see~\cite{Durrett, NewmanWatts1, NewmanWatts2}. In an attempt to model social networks arising in ``real-life'' settings, one studies properties of networks composed of a (usually sparse) connected ``base'' graph along with a set of random edges, where every random edge is added independently with probability $p$. One well-known example is the Newmann--Watts small world model~\cite{NewmanWatts1, NewmanWatts2} (NW small world for short), where the base graph is the $(n,k)$-ring, i.e., the graph with vertex set $\{0, \ldots, n-1\}$ and edge set $\big\{\{i,j\}: i+1\le j \le i+k \big\}$ (where addition is modulo $n$) and $p$ is equal to $\frac{c}{n}$ for some constant $c>0$.

Durrett~\cite{Durrett} showed that with high probability the mixing time of the lazy random walk on the NW small world is upper-bounded by $O(\log^3 n)$ and lower-bounded by $\Omega(\log^2 n)$. These results were improved by Addario-Berry and Lei~\cite{Addario} who proved that this mixing time is a.a.s.\ $O(\log^2 n)$. It is worth noting that our approach is similar to~\cite{Addario} in the sense that we bound the conductance of connected sets and then use this upper bound with the results of~\cite{FoRe07} to bound the mixing time. The crucial difference between our proof and theirs is the technique of counting connected sets with small boundary. While~\cite{Addario} uses a somewhat involved argument based on the Lagrange inversion formula, we use a more elementary approach based on Proposition~\ref{prop:Cvab}.

Similar ideas were used in the study of the mixing time of the simple random walk on the giant component in a supercritical random graph $G(n, \frac{1+\eps}{n})$. Fountolakis and Reed~\cite{FoRe07} and Benjamini, Kozma, and Wormald~\cite{BKW} showed that a.a.s.\ this mixing time is $O(\log^2 n)$. Moreover, there has been interest in probability theory in studying the robustness of the mixing time under random perturbations, see~\cite{BM,Ding}.

Flaxman~\cite{Flaxman} examined the edge expansion of several models of randomly perturbed graphs. In particular, he considered the model studied in this work. He showed in particular that if $G=(V,E)$ is an $n$-vertex connected graph and $R \sim G(n,\frac{\eps}{n})$, then a.a.s.\ all linear sized vertex subsets $S \subseteq V$, $|S|\le n/2$, send outside at least a linear in $n$ number of edges in $G^*=G\cup R$.
The effect of adding random edges on the diameter of a given graph was considered by Bollob\'{a}s and Chung~\cite{BC}, who proved that adding a random matching to an $n$-vertex cycle result a.a.s.\ with a graph with diameter $(1+o(1))\log_2 n$. The case of \emph{directed} graphs was considered by Flaxman and Frieze~\cite{FlaxmanFrieze}. They proved that if $D$ is an $n$-vertex strongly connected digraph with maximum degree bounded by $n^{\frac{\eps}{100}}$ and $R \sim D(n, \frac{\eps}{n})$, then a.a.s.\ the diameter of $D \cup R$ is at most $100 \eps^{-1} \log n$. Our proof idea is different from theirs.



\subsection{Outline of the paper}

In Section~\ref{sec:preliminaries}, we fix some notation, give a precise definition of the mixing time of a random walk, and state two auxiliary probabilistic lemmas that are used later in the paper. In Sections~\ref{sec:vertex} and~\ref{sec:edge-expansion} we prove Theorems~\ref{thm:tree} and~\ref{thm:expansion}, respectively. Section~\ref{sec:edge-expansion} contains also the a proof of Proposition~\ref{prop:Cvab}. In Section~\ref{sec:exp_conn} we prove Theorem~\ref{thm:expansion-connected}. Building upon Theorem~\ref{thm:expansion-connected}, we prove Theorem~\ref{thm:diameter} (in Section \ref{sec:diameter}) and Theorem~\ref{thm:main} (in Section~\ref{sec:mixing-time}). The proof of Theorem~\ref{thm:long} can be found in Section~\ref{sec:long}. In Section~\ref{sec:concluding-remarks}, we state several concluding remarks.

\section{Preliminaries}

\label{sec:preliminaries}

Let $G$ be a graph with vertex set $V$. Given two disjoint sets $A, B \subseteq V$, we denote by $E(A, B)$ the set of all edges with one endpoint in $A$ and one endpoint in $B$ and by $E(A)$ the set of all edges entirely contained in $A$. We will denote the cardinality of $E(A)$ by $e(A)$. The degree of a vertex $v$ in $G$ is denoted by $\deg(v)$ and the maximum degree of $G$ is denoted by $\Delta(G)$.


We denote by $[n]$ the set $\{1, \ldots, n\}$. When dealing with an $n$-vertex graph, we will implicitly assume that that its vertex set is $[n]$. We denote by $G(n,p)$ the classical binomial random graph with vertex set $[n]$ and edge probability $p$. Given a graph property $\cP$ and a sequence $(\mu_n)$, where $\mu_n$ is a probability distribution over $n$-vertex graphs, we will say that $\cP$ holds asymptotically almost surely (a.a.s.) if $\lim_{n \to \infty} \Pr_{G \sim \mu_n}(G \in \cP)=1.$

The lazy random walk on a graph $G = (V,E)$ is the Markov chain defined as follows. The set of states is $V$. For any vertex $u \in V$, the walk stays in $u$ with probability $\frac{1}{2}$ and with probability $\frac{1}{2}$, it moves to a uniformly chosen random neighbor $v$ of $u$ (so that the transition probability $\Pr(u \to v)$ is $\frac{1}{2\deg(u)}$). When $G$ is connected, this Markov chain is well-known to be irreducible and ergodic and hence it converges to a stationary distribution $\pi$ which can be seen to satisfy $\pi(u)=\frac{\deg(u)}{2|E|}$ for every $u \in V$, see~\cite{Peres}. We will be interested in estimating how quickly this random walk on $G$ converges to its stationary distribution $\pi$. To this end, we recall that the \emph{total variation distance} $\dTV$ between two distributions $p_1$, $p_2$ on $V$ is defined by
\[
\dTV(p_1, p_2) := \max_{A \subseteq V} |p_1(A) - p_2(A)|.
\]
Let $P$ be the transition matrix of the random walk. The mixing time $\Tm(G)$ is defined by
\[
\Tm(G):= \sup_{x_0}\min \left\{ t \colon d_{TV}(x_0P^t,\pi) \le \frac{1}{4} \right\},
\]
where the supremum is taken over all probability distributions $x_0$ on $V$.

The following lemma is standard. We provide a proof for the sake of completeness.
\begin{lemma}
  \label{lemma:eS}
  For every $C \ge 1$, if $p \le C/n$, then a.a.s.\ for every non-empty set $S$ of vertices in $G(n,p)$, we have $e(S) < 2C|S|$. In particular, if $p \le \frac{1}{n}$, then a.a.s.\ for every non-empty set $S$ of vertices in $G(n,p)$, we have $e(S) < 2|S|$.
\end{lemma}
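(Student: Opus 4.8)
The plan is a routine first-moment computation, organized by the size of $S$, with one extra observation needed for large sets. Fix $s$ with $1\le s\le n$ and consider a fixed vertex set $S$ with $|S|=s$. If $s\le 4C$ then $e(S)\le\binom s2<2Cs$ holds deterministically (one checks $\binom s2<2Cs$ whenever $s\le 4C$), so there is nothing to prove for such $s$. For $s>4C$, the variable $e(S)$ is distributed as $\Bin\!\left(\binom s2,p\right)$, whose mean is at most $\frac{s^2}{2}\cdot\frac Cn=\frac{Cs^2}{2n}$; hence by the standard estimate $\Pr[\Bin(N,p)\ge t]\le (eNp/t)^t$ applied with $t=2Cs$,
\[
\Pr\big[e(S)\ge 2Cs\big]\le\left(\frac{es}{4n}\right)^{2Cs}.
\]

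Next I would take a union bound over the at most $\binom ns\le(en/s)^s$ choices of an $s$-element set, obtaining
\[
\Pr\big[\exists\,S,\ |S|=s,\ e(S)\ge 2Cs\big]\le\left(\frac{en}{s}\right)^{s}\left(\frac{es}{4n}\right)^{2Cs}=\left(\frac{e^{2C+1}}{4^{2C}}\left(\frac sn\right)^{2C-1}\right)^{s}=:q_s .
\]
Since $C\ge 1$, the exponent $2C-1$ is at least $1$, and $\frac{e^{2C+1}}{4^{2C}}\cdot 2^{-(2C-1)}=2e(e^2/64)^C\le e^3/32<1$. Thus for $4C<s\le n/2$ we have simultaneously $q_s\le\rho^s$ with the absolute constant $\rho:=e^3/32<1$, and $q_s\le(\beta s/n)^s$ with $\beta:=e^{2C+1}/4^{2C}$. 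Summing $q_s$ over this range is then immediate: for $s\le\sqrt n$ use the second bound, so $q_s\le(\beta/\sqrt n)^s$ and $\sum_{s\le\sqrt n}q_s=O(n^{-1/2})$; for $\sqrt n<s\le n/2$ use the first, so $\sum_{\sqrt n<s\le n/2}q_s\le\sum_{s>\sqrt n}\rho^s=o(1)$. Hence a.a.s.\ $e(S)<2C|S|$ for every $S$ with $|S|\le n/2$.

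It remains to treat sets of size larger than $n/2$, where the union bound above is useless since $q_s$ can exceed $1$. For these I would simply use that $e(S)\le e(G(n,p))$ for every $S$, together with the fact that a.a.s.\ $e(G(n,p))<Cn$: its expectation is $\binom n2 p<Cn/2$, so a standard Chernoff bound for the upper tail of a binomial gives $\Pr[e(G(n,p))\ge Cn]=e^{-\Omega(n)}$. Consequently a.a.s.\ every $S$ with $|S|>n/2$ satisfies $e(S)\le e(G(n,p))<Cn<2C|S|$. Combining the two ranges proves the first assertion, and the ``in particular'' part is the special case $C=1$. The only mildly non-routine point is this last (large-$s$) range, which is handled not by the moment bound but by the trivial inequality $e(S)\le e(G)$ plus the a.a.s.\ linear bound on the total number of edges; the rest is a direct geometric-series estimate, and I expect no real obstacle.
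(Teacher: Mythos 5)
Your proof is correct and follows essentially the same route as the paper's: deterministic verification for very small sets, a first-moment union bound for sets of size up to $n/2$ (split at $\sqrt{n}$, with the same constant $e^3/32<1$ appearing), and the bound $e(S)\le e(G(n,p))<Cn$ for sets larger than $n/2$. No issues.
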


\begin{proof}
  For a fixed set $S$ of size $k$,
 \[
 \Pr\big(e(S) \ge 2Ck\big) \le \binom{\binom{k}{2}}{2Ck} p^{2Ck} \le \left(\frac{ekp}{4C}\right)^{2Ck}
  \]
  and hence, letting $\mathcal{E}$ denote the event that $e(S) \ge 2C|S|$ for some $S \neq \emptyset$,
  \[
  \begin{split}
    \Pr(\mathcal{E}) & \le \Pr\big(e(G(n,p)) \ge Cn\big) + \sum_{k = 5}^{n/2} \binom{n}{k}\left(\frac{ekp}{4C}\right)^{2Ck} \\
   & \le \sum_{k=5}^{n/2} \left(\frac{en}{k} \cdot \left(\frac{ekp}{4C}\right)^{2C}\right)^{k} + o(1) = o(1).
  \end{split}
  \]
 To see the last equality, note that if $k \le \sqrt{n}$, then $np^{2C}k^{2C-1} = O(1/\sqrt{n})$ and if $k \le n/2$, then
  \[
  \frac{en}{k} \cdot \left(\frac{ekp}{4C}\right)^{2C} \le 2e \cdot \left(\frac{e}{8}\right)^{2C} \le \frac{e^3}{32} \le \frac{2}{3}.\qedhere
  \]
\end{proof}

We close this section with a version of Chernoff's inequality (see, e.g., \cite{MU}).

\begin{lemma}
  \label{lemma:Chernoff}
  Suppose that $X = \sum_{i=1}^mX_i$, where every $X_i$ is a $\{0,1\}$-valued random variable with $\Pr(X_i=1)=p$ and the $X_i$s are jointly independent. Then for each $\eta \in (0,1)$,
  \[
  \Pr\big( X<(1-\eta)pm \big) \le \exp(-pm\eta^2/2).
  \]
\end{lemma}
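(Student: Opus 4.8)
The plan is to run the standard exponential-moment (Chernoff) argument, tuned to the lower tail, and then finish with an elementary scalar inequality.

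First I would introduce a free parameter $t>0$ and apply Markov's inequality to the nonnegative random variable $e^{-tX}$. Since for $t>0$ the event $\{X<(1-\eta)pm\}$ is contained in $\{e^{-tX}\ge e^{-t(1-\eta)pm}\}$, we get
\[
\Pr\big(X<(1-\eta)pm\big)\le e^{t(1-\eta)pm}\,\mathbb{E}\big[e^{-tX}\big].
\]
By joint independence of the $X_i$, the moment generating function factorizes: $\mathbb{E}[e^{-tX}]=\prod_{i=1}^m\mathbb{E}[e^{-tX_i}]=\big(1-p(1-e^{-t})\big)^m$, and using $1+x\le e^x$ this is at most $\exp\!\big(-pm(1-e^{-t})\big)$. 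Combining the two estimates,
\[
\Pr\big(X<(1-\eta)pm\big)\le \exp\!\Big(pm\big[t(1-\eta)-(1-e^{-t})\big]\Big).
\]

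Next I would optimize the exponent over $t>0$. The choice $t:=\ln\frac{1}{1-\eta}$ is legitimate because $\eta\in(0,1)$ forces $t>0$, and it makes $e^{-t}=1-\eta$, so the bracket collapses to $-(1-\eta)\ln(1-\eta)-\eta$. Hence
\[
\Pr\big(X<(1-\eta)pm\big)\le \exp\!\Big(-pm\big(\eta+(1-\eta)\ln(1-\eta)\big)\Big).
\]

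Finally, it remains to observe that $\eta+(1-\eta)\ln(1-\eta)\ge \eta^2/2$ for all $\eta\in[0,1)$; substituting this into the previous display yields exactly the claimed bound. This last inequality I would verify by a one-line calculus argument — the function $f(\eta)=\eta+(1-\eta)\ln(1-\eta)-\eta^2/2$ satisfies $f(0)=0$ and $f'(\eta)=-\ln(1-\eta)-\eta\ge 0$ on $[0,1)$ — or via the series expansion $(1-\eta)\ln(1-\eta)=-\eta+\sum_{k\ge 2}\frac{\eta^k}{k(k-1)}\ge -\eta+\tfrac{\eta^2}{2}$. There is no genuine obstacle here; the only points that require a little care are selecting the optimal value of $t$ and checking the closing scalar inequality.
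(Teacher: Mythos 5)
Your argument is correct and complete: the exponential-moment bound, the choice $t=\ln\frac{1}{1-\eta}$, and the closing inequality $\eta+(1-\eta)\ln(1-\eta)\ge\eta^2/2$ (justified either by the derivative computation or by the series expansion) all check out. The paper does not prove this lemma but simply cites a standard reference, and yours is exactly the standard proof one finds there, so there is nothing further to compare.
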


\section{Vertex expansion}
\label{sec:vertex}

In this section, we prove Theorem~\ref{thm:tree}.

\begin{proof}[{Proof of Theorem~\ref{thm:tree}}]
  Let $G$ be a connected graph with $n$ vertices and maximum degree $\Delta$ and suppose that $\eps \le 1$. We may assume that $\Delta^3 / \eps \ll n / \log n$ as otherwise the assertion of the theorem follows immediately from the fact that $G$ is connected and hence $\iota(G^*) \ge \iota(G) = \Omega(\frac{1}{n})$. Let
  \[
  k = \frac{C\Delta\log n}{\eps},
  \]
  where $C$ is an absolute constant, which we will define later. Partition the vertex set of $G$ into \emph{disjoint} pieces $V_1, \ldots, V_t$, such that for each $1 \le i \le t$, we have $k \le |V_i| \le \Delta k$ and $G[V_i]$ is \emph{connected}. This is fairly straightforward, see, e.g., \cite[Proposition~4.5]{KN}. Call each $V_i$ a~\emph{blob}. Observe that
  \[
  \frac{n}{\Delta k} \le t \le \frac{n}{k}.
  \]

  Suppose that $R \sim G(n,\frac{\eps}{n})$ and let $G^* = G \cup R$. The probabilistic statement about the random graph $R$ that we need is the following one.

  \begin{claim}
    A.a.s.\ for every non-empty $I \subseteq [t]$ with $|I| \le t/2$, there are at least $|I|/2$ blobs with indices outside of $I$ that are connected by an edge of $R$ to the set $\bigcup_{i\in I} V_i$.
  \end{claim}
  \begin{proof}
    Let
    \[
    \rho = \frac{\eps k^2}{2n}.
    \]
    Clearly, the probability that two blobs $V_i$ and $V_j$ are connected in $R$ is $1-(1-\frac{\eps}{n})^{|V_i|\cdot|V_j|}$, which is at least $\rho$, provided that $n$ is sufficiently large, as $\rho = o(1)$ by our assumption that $\Delta^3 \le \eps n \le n$. The probability $P$ that there exists a non-empty set $I \subseteq [t]$ with $|I| \le t/2$ such that the set $\bigcup_{i\in I} V_i$ has an edge (in $R$) to fewer than $|I|/2$ blobs outside of $I$ satisfies
    \[
    P \le \sum_{1 \le j \le t/2}\binom{t}{j}\binom{t-j}{\lfloor j/2 \rfloor} (1-\rho)^{(t-\lfloor \frac{3j}{2} \rfloor)j} \le \sum_{1 \le j \le t/2} t^{\frac{3j}{2}+1}\cdot \exp\left(-\frac{\rho j t}{4} \right).
    \]
    It is easy to verify that $P = o(1)$ if $C > 20$.
  \end{proof}

  We may assume that $R$ has the property from the statement of the claim. We claim that $G^*$ has vertex expansion at least $\frac{\delta \eps}{\Delta^3 \log n}$ for some absolute positive constant $\delta$. Fix a set $A \subseteq [n]$ with $|A|\le n/2$ and denote
  \begin{align*}
    I_0 & = I_0(A)=\{ i \in [t] \colon V_i\subseteq A\}, \\
    I_1 & = I_1(A)=\{ i \in [t] \colon \emptyset \ne V_i\cap A\ne V_i\}, \\
    I_2 & = I_2(A)=\{i \in [t] \setminus I_0 \colon \text{$V_i$ has a neighbor in $A$}\}.
  \end{align*}
  In other words, $I_0$ is the set of (indices of) blobs fully contained in $A$, $I_1$ is the set of blobs having at least one vertex in $A$ but not falling completely inside $A$, and finally $I_2$ is the set of blobs outside $I_0$ having a neighbor in $A$. We now develop inequalities relating the sizes of $I_0,I_1,I_2$. These inequalities allow us to directly estimate $|N(A)|$.

  It follows from the assumed property of $R$ that a.a.s.
  \begin{equation}
    \label{eq:I2-lb}
    |I _2|\ge \min\left\{\frac{|I_0|}{2}, \frac{t-|I_0|}{3}\right\}.
  \end{equation}
  This is clear when $|I_0| \le t/2$; we simply take $I = I_0$. Else, we let $I = [t] \setminus (I_0 \cup I_2)$, note that $|I| \le t - |I_0| \le t/2$, and observe that no blob in $I$ is connected to a blob in $I_0$ (and hence the neighborhood of $I$ must be completely contained in $I_2$). Observe crucially that $|N(A)|\ge |I_1 \cup I_2|$. Indeed, $A$ has at least one neighbor in each set $V_i \setminus A$ with $i \in I_1 \cup I_2$. This follows from connectivity of $V_i$ when $i \in I_1$ and from the fact that $V_i \cap A = \emptyset$ when $i \in I_2 \setminus I_1$. As $|A| \le \frac{n}{2}$ and $A^c \subseteq \bigcup_{i \not\in I_0} V_i$, we have $|I_0| \le (1-\frac{1}{2\Delta})t$. In particular, it follows from~\eqref{eq:I2-lb} that $|I_2| \ge \frac{|I_0|}{6\Delta}$. Finally, note that $A \subseteq \bigcup_{i\in I_0\cup I_1} V_i$, implying that
  \[
  |I_0| + |I_1| \ge \frac{|A|}{\Delta k}.
  \]
  Now, if $|I_1| \ge |I_0|$, then $|N(A)| \ge |I_1| \ge \frac{|A|}{2\Delta k}$. Otherwise, $|N(A)| \ge |I_2| \ge \frac{|I_0|}{6\Delta} \ge \frac{|A|}{12\Delta^2 k}$. Hence, $\pG$ has vertex expansion at least $\frac{\delta \eps}{\Delta^3 \log n}$, where $\delta$ is an absolute positive constant.
\end{proof}

\noindent{\bf Remark.} Observe that the exact same proof as above works if instead of assuming that the graph $G$ is connected and has maximum degree bounded by $\Delta$, we assume that $\Delta(G)\le \Delta$ and all connected components of $G$ are at least as large as $C \Delta \log n / \eps$, where $C$ is a large enough constant.

\section{Edge expansion}

\label{sec:edge-expansion}

In this section, we prove Proposition~\ref{prop:Cvab} and derive from it Theorem~\ref{thm:expansion}.
\begin{proof}[Proof of Proposition~\ref{prop:Cvab}]
  Assume that the vertices of $G$ are labeled with distinct integers. We will describe an algorithm that, given an $A \in \cC(v,a,b)$, outputs an encoding of $A$ using a sequence of $a-1$ ones and $b$ zeros in such a way that no two sets are encoded with the same sequence. This will clearly imply the statement of the proposition.

  Let $S = \{v\}$ and $B = \emptyset$. The algorithm will grow the sets $S$ and $B$, adding one vertex to one of the sets in each of its $a+b-1$ iterations, making sure that the invariants $S \subseteq A$ and $B \subseteq N(A)$ hold in every iteration. It will stop when $S = A$ and $B = N(A)$, after having moved $a-1$ vertices to $S$ and $b$ vertices to $B$. For the sake of brevity, we will denote by $T$ the set $N(S) \setminus B$, updated after each iteration. Intuitively, in every iteration, $S$ is the set of vertices that are known to belong to $A$, $B$ is the set of vertices that are known to belong to $N(A)$, and $T$ is the remaining set of vertices for which we do not yet know whether they belong to $A$ or to $N(A)$.

  While $S \neq A$ or $B \neq N(A)$, we repeat the following. Let $w$ be the vertex with the smallest label in $T$. Note that the assumption that $A$ is connected implies that $T$ is non-empty. Consider two cases. If $w \in A$, move $w$ to $S$ and append $1$ to the sequence encoding $A$. Otherwise, if $w \not\in A$, then move $w$ to $B$ and append $0$ to the sequence encoding $A$. Note that in this case $w \in N(A)$, since $S \subseteq A$ and $w \in N(S) \setminus A$.

  A moment of thought reveals that decoding can be performed in an analogous way and given $v$ and the $\{0,1\}$-sequence encoding $A$, one can recover the set $A$. This completes the proof.
\end{proof}

After this work was completed, Noga Alon pointed out to us that one may derive Proposition~\ref{prop:Cvab} from the following classical inequality proved by Bollob\'as~\cite{Bo65}.

\begin{thm}[{\cite{Bo65}}]
  \label{thm:set-pairs}
  Suppose that $A_1, \ldots, A_m$ and $B_1, \ldots, B_m$ be sets such that $|A_i| = k$ and $|B_i| = \ell$ for all $i \in \{1, \ldots, m\}$. If furthermore
  \begin{itemize}
  \item
    $A_i \cap B_i = \emptyset$ for all $i \in \{1, \ldots, m\}$,
  \item
    $A_i \cap B_j \neq \emptyset$ for all distinct $i,j \in \{1, \ldots, m\}$,
  \end{itemize}
  then $m \le \binom{k+\ell}{\ell}$.
\end{thm}

Indeed, in order to derive the claimed upper bound on $|\cC(v,a,b)|$, one may invoke Theorem~\ref{thm:set-pairs} with $k = a-1$, $\ell = b$, $m = |\cC(v,a,b)|$, and $\{(A_i, B_i)\}_{i=1}^m$ being the set of all pairs $(A \setminus \{v\}, N(A))$ with $A \in \cC(v,a,b)$. It is straightforward to check that the assumptions of the theorem are satisfied.

\begin{proof}[Proof of Theorem \ref{thm:expansion}]
  For positive integers $s$, $m$, and $b$, denote by $\cS(s, m, b)$ the collection of all sets $S$ of $s$ vertices such that in the graph $G$, the set $S$ induces exactly $m$ connected components and the sum of their vertex boundaries is exactly $b$. In other words, $\cS(s,m,b)$ consists of all sets $S \subseteq V(G)$ such that there is a partition $S = S_1 \cup \ldots \cup S_m$, where each $S_i$ is connected, there are no edges of $G$ connecting different $S_i$, and $|N_G(S_1)| + \ldots + |N_G(S_m)| = b$. Since $G$ is connected, each $S \in \cS(s, m, b)$ satisfies $|\partial_G S| \ge b \ge m \ge 1$ (but not necessarily $|N_G(S)| \ge b$). Therefore, it is enough to show that there exist positive constants $K$ and $\delta$ such that a.a.s.\ for every $s$ satisfying $s \ge K \log n$,
  \begin{equation}
    \label{eq:partial-R-S}
    |\partial_R S| \ge \frac{\delta s}{\log (en/s)} \quad \text{for all $S \in \cS(s,m,b)$ with $m \le b \le \frac{\delta s}{\log (en/s)}$}.
  \end{equation}
(For small sets $S$ with $|S|\le K\log n$, we have that $|\partial_{\pG}S|\ge |\partial_G S|\ge 1\ge \frac{\delta s}{\log (en/s)}$, since we may assume that $K \delta\le 1/2$).
  In order to facilitate a union bound argument, we will estimate the size of $\cS(s, m, b)$ with small $b$ and $m$ using Proposition~\ref{prop:Cvab}. To this end, we first argue that each set in $\cS(s, m, b)$ can be exactly described by the following:
  \begin{enumerate}[(i)]
  \item
    a set $W = \{v_1, \ldots, v_m\}$ of $m$ vertices of $G$,
  \item
    a partition $s = s_1 + \ldots + s_m$, where $s_i \ge 1$ for each $i$,
  \item
    a partition $b = b_1 + \ldots + b_m$, where $b_i \ge 1$ for each $i$, and
  \item
    a set $S_i$ in $\cC(v_i, s_i, b_i)$ for each $i \in [m]$.
  \end{enumerate}

  To see this, note that we may assume that there is a canonical linear ordering on the vertices of $G$. The representation of $S$ in $\cS(s,m,b)$ as (i)--(iv) is natural. Indeed, given such an $S$, we find the unique partition $\{S_1, \ldots, S_m\}$ into connected components of $G[S]$ and arbitrarily choose one vertex from each $S_i$ to form $W$. We order the sets $S_1, \ldots, S_m$ according to the canonical linear ordering on their representatives $v_1, \ldots, v_m$. Finally, we let $s_i = |S_i|$ and $b_i = |\partial_G S_i|$. Observe that this mapping is not only injective, but actually each set $S$ can be represented in $s_1 \cdot \ldots \cdot s_m$ different ways.

  It follows from Proposition~\ref{prop:Cvab}, as well as from the inequality $\binom{x}{y}\binom{w}{z} \le \binom{x+w}{y+z}$, that
  \[
  \begin{split}
    |\cS(s, m, b)| & \le \binom{n}{m} \sum_{(s_i), (b_i)} \prod_{i=1}^m \binom{s_i + b_i - 1}{b_i} \le \binom{n}{m} \sum_{(s_i),(b_i)} \binom{\sum_i(s_i+b_i-1)}{\sum_i b_i} \\
    & \le \binom{n}{m} \binom{s-1}{m-1} \binom{b-1}{m-1} \binom{s+b-m}{b} \le \binom{n}{m}\binom{s}{m}\binom{b}{m}\binom{s+b}{b}. \\
  \end{split}
  \]
  Consequently, if $m \le b \le \delta s / \log (en/s) \le s$, then it follows from the well-known estimate $\binom{x}{y} \le \left(\frac{ex}{y}\right)^y$ and the fact that the function $y \mapsto (ex/y)^y$ is increasing on the interval $(0,x]$ that
  \[
  \begin{split}
    |\cS(s, m, b)| & \le \left(\frac{en}{m}\right)^m \left(\frac{es}{m}\right)^m \left(\frac{eb}{m}\right)^m \left(\frac{e(s+b)}{b}\right)^b \le \left(\frac{e^4 n s (s+b)}{b^3}\right)^b \\
    & \le \left(\frac{2 e^4 n (\log (en/s))^3}{\delta^3 s }\right)^{\frac{\delta s}{\log (en/s)}} \le \exp\left(C\delta \log(1/\delta) s\right),
  \end{split}
  \]
  where $C$ is some absolute constant.

  On the other hand, by Chernoff's inequality, for a fixed set $S$ with $|S| = s \le \alpha n$,
  \[
  \begin{split}
    \Pr(|\partial_R S| < \delta s) & \le \Pr\big(\Bin(s(n-s), \eps/n) < \delta s\big) \\
    & \le \Pr\big(\Bin((1-\alpha)sn, \eps/n) < \delta s\big) \le \exp\big(- (1-\alpha)\eps s/8 \big).
  \end{split}
  \]
  provided that $\delta < (1-\alpha)\eps/2$.

  Finally, choose positive constants $K$ and $\delta$ such that
  \[
  K \ge \frac{64}{\eps(1-\alpha)}, \qquad C\delta \log (1/\delta) \le \frac{\eps(1-\alpha)}{16}, \qquad \text{and} \qquad K\delta \le 1/2.
  \]
  Taking a union bound over all triples $b$, $m$, and $s$ satisfying $K \log n \le s \le \alpha n$ and $m \le b \le \delta s / \log (en/s)$, we get that
  \[
  \Pr(\text{property \eqref{eq:partial-R-S} fails}) \le n^3\exp\left[\left(C\delta \log(1/\delta) - \frac{\eps(1-\alpha)}{8}\right)s\right] = o(1).\qedhere
  \]
\end{proof}

\section{Edge expansion of connected sets}
\label{sec:exp_conn}
Here we prove Theorem~\ref{thm:expansion-connected}.
\begin{proof}
  Due to the obvious monotonicity we can assume that  $\eps <1$. Recall the definition of $\cS(s, m, b)$ from the proof of Theorem~\ref{thm:expansion}. It clearly suffices to show that a.a.s.\ for every $s$ with $K \log n \le s \le \alpha n$,
  \begin{equation}
    \label{eq:partial-R-S-conn}
    |\partial_R S| \ge \delta s \quad \text{for all \emph{connected} $S \in \cS(s,m,b)$ with $m \le b < \delta s$},
  \end{equation}
  where connected means connected in the graph $\pG$.

  Let us denote by $\cSp(s,m,b)$ the collection of all ordered pairs
  \begin{itemize}
  \item
    $S = S_1 \cup \ldots \cup S_m \in \cS(s,m,b)$, where $S_1, \ldots, S_m$ are connected components of $G[S]$,
  \item
    $m - 1$ pairs $\{s_1,t_1\}, \ldots, \{s_{m-1},t_{m-1}\}$ of vertices of $S$ whose addition to $G$ makes $G[S]$ connected.
  \end{itemize}
  A moment of thought reveals that for fixed $s$, the probability that~\eqref{eq:partial-R-S-conn} does not hold is bounded by
  \begin{equation}
    \label{eq:pr-nonexp-conn}
    \sum_{m=1}^{\delta s} \sum_{b = m}^{\delta s} |\cSp(s, m, b)| \cdot (\eps/n)^{m-1} \cdot \Pr\big(\Bin(s(n-s),\eps/n) \le \delta s \big).
  \end{equation}
  Therefore, it suffices to prove the following.
  \begin{claim}
    There exists an absolute constant $C$ such that for all $s$, $m$, and $b$ with $m \le b \le \delta s$,
    \[
    |\cSp(s,m,b)| \le n^m \exp\big( C \delta \log(1/\delta) s \big).
    \]
  \end{claim}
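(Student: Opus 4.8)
The plan is to write $|\cSp(s,m,b)|=\sum_{S\in\cS(s,m,b)}N(S)$, where $N(S)$ denotes the number of families of $m-1$ pairs of vertices of $S$ whose addition to $G$ makes $G[S]$ connected, to bound $N(S)$ uniformly over $S\in\cS(s,m,b)$, and then to invoke the estimate on $|\cS(s,m,b)|$ already obtained in the proof of Theorem~\ref{thm:expansion}.

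Fix $S$ with connected components $S_1,\dots,S_m$ of sizes $s_1,\dots,s_m$. Since $G[S]$ has exactly $m$ components, a family of only $m-1$ pairs can connect it only if those pairs, contracted onto the components, form a tree; hence each such family arises from a spanning tree $\tau$ of the complete graph on $\{1,\dots,m\}$ together with, for each edge $\{i,j\}$ of $\tau$, a choice of one vertex of $S_i$ and one vertex of $S_j$, so that
\[
N(S)\le\sum_{\tau}\ \prod_{\{i,j\}\in\tau}s_is_j=\sum_{\tau}\ \prod_{i=1}^ms_i^{\deg_\tau(i)}.
\]
The crucial point is to evaluate the last sum by the weighted form of Cayley's formula — elementary via Pr\"ufer sequences, since a Pr\"ufer sequence of a tree on $\{1,\dots,m\}$ ranges over $\{1,\dots,m\}^{m-2}$ and contains $i$ exactly $\deg_\tau(i)-1$ times — namely $\sum_\tau\prod_i x_i^{\deg_\tau(i)}=(x_1+\dots+x_m)^{m-2}\prod_i x_i$. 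Taking $x_i=s_i$ gives $\sum_\tau\prod_i s_i^{\deg_\tau(i)}=s^{m-2}\prod_i s_i$, whence $N(S)\le s^{m-2}\prod_i s_i\le s^{m-2}(s/m)^m$ by AM--GM. A cruder bound such as $N(S)\le\binom{\binom s2}{m-1}\le s^{2(m-1)}$ is useless here, since it leads to an estimate of order $\exp\!\big(\Theta(\delta s\log s)\big)$; trading $s^{2(m-1)}$ for $s^{m-2}\prod_i s_i$ is exactly what makes the argument work.

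Combining this with the inequality $|\cS(s,m,b)|\le\binom nm\binom sm\binom bm\binom{s+b}{b}$ from the proof of Theorem~\ref{thm:expansion} yields
\[
|\cSp(s,m,b)|\le\binom nm\binom sm\binom bm\binom{s+b}{b}\cdot s^{m-2}\Big(\tfrac sm\Big)^m,
\]
and it remains to check that, for $\delta$ small and $m\le b\le\delta s$, the right-hand side is at most $n^m\exp(C\delta\log(1/\delta)s)$. The one nonroutine ingredient is the elementary inequality $m\log(s/m)\le\delta\log(1/\delta)\,s$ whenever $1\le m\le\delta s$ and $\delta<1/e$ (valid because $x\mapsto x\log(1/x)$ is increasing on $(0,1/e)$ and $m/s\le\delta$), which controls $\binom sm\le(es/m)^m$ and the factor $(s/m)^m$; one should also keep the factor $1/m!$ in $\binom nm\le n^m/m!$ to absorb $s^{m-2}$ (using $m!\ge(m/e)^m$, so that $\binom nm\,s^{m-2}\le n^m(es/m)^m$), and use the full range $b\le\delta s$ — rather than $b\le\delta s/\log(en/s)$ as in Theorem~\ref{thm:expansion} — when bounding $\binom bm\le 2^b$ and $\binom{s+b}{b}\le(e(s+b)/b)^b\le(2e/\delta)^{\delta s}$, which is harmless precisely because we are now allowed to carry an entire factor $n^m$. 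The main obstacle is thus identifying that $N(S)$ must be handled through the weighted tree enumerator; once $N(S)\le s^{m-2}\prod_i s_i$ is in hand, everything else is bookkeeping with binomial coefficients.
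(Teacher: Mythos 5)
Your proof is correct, and it reaches the stated bound by a route that differs from the paper's in its key counting step. The paper encodes each element of $\cSp(s,m,b)$ injectively by the data (i)--(vii) --- in particular, the spanning tree that the $m-1$ pairs induce on the components $S_1,\dots,S_m$ is recorded through its root, the outdegree sequence $(d_i)$, the multisets $D_i$ of attachment points, and a permutation $f$ --- and then collapses the resulting sum of products of binomial coefficients via $\binom{x}{y}\binom{w}{z}\le\binom{x+w}{y+z}$, arriving at $(2n)^m\binom{s}{m}^2\binom{b}{m}\binom{s+b}{b}$. You instead factor $|\cSp(s,m,b)|=\sum_S N(S)$, reuse the bound on $|\cS(s,m,b)|$ already established in the proof of Theorem~\ref{thm:expansion}, and evaluate $N(S)$ essentially in closed form with the weighted Cayley formula, getting $N(S)\le s^{m-2}\prod_i s_i\le s^{m-2}(s/m)^m$. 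The two computations are of comparable strength --- both are, at bottom, Pr\"ufer-type counts of trees on the components together with choices of endpoints --- but yours cleanly isolates the only new difficulty relative to Theorem~\ref{thm:expansion} in a single identity, and you correctly identify that the naive bound $\binom{\binom{s}{2}}{m-1}\le s^{2(m-1)}$ would be too lossy. Your closing bookkeeping also checks out: the monotonicity of $x\mapsto x\log(1/x)$ on $(0,1/e)$ gives $m\log(s/m)\le\delta\log(1/\delta)s$, keeping the $1/m!$ from $\binom{n}{m}$ absorbs $s^{m-2}$, and $\binom{s+b}{b}\le(2e/\delta)^{\delta s}$ is handled by the extra factor $n^m$ exactly as intended. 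The implicit assumption $\delta<1/e$ is harmless, since the paper's own final inequality $(3e^4/\delta^3)^{\delta s}\le\exp(C\delta\log(1/\delta)s)$ likewise requires $\delta$ bounded away from $1$, and $\delta$ is ultimately chosen small in the proof of Theorem~\ref{thm:expansion-connected}.
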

  Indeed, if $K \log n \le s \le \alpha n$, then by Chernoff's inequality,
  \[
  \begin{split}
    \Pr\big(\Bin(s(n-s), \eps/n) < \delta s\big) & \le \Pr\big(\Bin((1-\alpha)sn, \eps/n) < \delta s\big) \\
    \le \exp\big(- (1-\alpha)\eps s/8\big),
  \end{split}
  \]
  provided that $\delta < (1-\alpha)\eps/2$. Hence, \eqref{eq:pr-nonexp-conn} is bounded from above by
  \[
  s^2 n \exp\left[\left(C\delta \log(1/\delta) - \frac{(1-\alpha)\eps}{8}\right) s\right].
  \]
  If we choose $K$ and $\delta$ as in the proof of Theorem~\ref{thm:expansion}, a union bound over $K \log n < s \le \alpha n$ yields that~\eqref{eq:pr-nonexp-conn} is indeed $o(1)$.

  Hence, it suffices to prove the claim. To this end, we will argue that each element of $\cSp(s,m,b)$ can be uniquely described by the following:
  \begin{enumerate}[(i)]
  \item
    a set $W = \{v_1, \ldots, v_m\}$ of vertices of $G$,
  \item
    a partition $s = s_1 + \ldots + s_m$, where $s_i \ge 1$ for each $i$,
  \item
    a partition $b = b_1 + \ldots + b_m$, where $b_i \ge 1$ for each $i$,
  \item
    a set $S_i$ in $\cC(v_i, s_i, b_i)$ for each $i \in [m]$,
  \item
    a partition $m - 1 = d_1 + \ldots + d_m$, where $d_i \ge 0$ for each $i$,
  \item
    a multiset $D_i$ of $d_i$ elements from $S_i$ for each $i \in [m]$,
  \item
    a permutation $f \colon [m-1] \to [m-1]$.
  \end{enumerate}

  Assuming that this is indeed the case, by Proposition~\ref{prop:Cvab} we have
  \[
  \begin{split}
    |\cSp(s, m, b)| & \le \binom{n}{m} (m-1)! \sum_{(s_i), (b_i), (d_i)} \prod_{i=1}^m \left[ \binom{s_i+b_i-1}{b_i} \binom{s_i+d_i-1}{d_i} \right] \\
    & \le \frac{n^m}{m} \binom{s-1}{m-1}^2 \binom{b-1}{m-1} \binom{2m-2}{m-1} \binom{s+b-m}{b} \le (2n)^m \binom{s}{m}^2 \binom{b}{m} \binom{s+b}{b}.
  \end{split}
  \]
  Consequently, if $m \le b \le \delta s$, then
  \[
  |\cSp(s, m, b)| \le n^m \left(\frac{2e^4 s^2 (s+b)}{b^3}\right)^b \le n^m \left(\frac{3e^4}{\delta^3}\right)^{\delta s} \le n^m \exp\left(C\delta \log(1/\delta) s\right),
  \]
  where $C$ is some absolute constant.

  Finally, we show that each $S \in \cSp(s,m,b)$ may be uniquely described by (i)--(vii). First, observe that (i)--(iv) uniquely describe the set $S = S_1 \cup \ldots \cup S_m$, together with a root vertex $v_i$ in each connected component $S_i$, whose use will be explained later. As in the proof of Theorem~\ref{thm:expansion}, one may assume some canonical linear ordering $\preceq$ on the set of vertices of $G$. Given this ordering, one may canonically order the sets $S_1, \ldots, S_m$ according to the canonical ordering $\preceq$ on the set $\{\min_\preceq S_1, \ldots, \min_\preceq S_m\}$ of representatives of each $S_i$. Now, note that the $m - 1$ pairs of vertices of $S$ whose addition to $G$ makes $G[S]$ connected naturally define a tree $T$ on the vertex set $\{S_1, \ldots, S_m\}$. Root this tree at $S_m$ and orient all of its edges away from the root. Now, start with $v_m \in S_m$ and for each $i \in [m-1]$ let $v_i \in S_i$ be the unique vertex of $S_i$ that lies in the pair of vertices of $S$ that corresponds to the unique edge of $T$ going into $S_i$. Next, for each $i \in [m]$, let $d_i$ be the outdegree of $S_i$ in $T$ and let $D_i$ be the multiset of $d_i$ vertices of $S_i$ that lie in the pairs of vertices of $S$ that correspond to the $d_i$ edges of $T$ going out of $S_i$. Finally, let $D = D_1 \cup \ldots \cup D_m$ and observe that the $m-1$ pairs of vertices of $S$ that correspond to the edges of $T$ define a bijection between $D$ and $\{v_1, \ldots, v_{m-1}\}$. Namely, if $D = \{w_1, \ldots, w_{m-1}\}$, where $w_1 \preceq \ldots \preceq w_{m-1}$, then this bijection can be described by a permutation $f \colon [m-1] \to [m-1]$ defined by letting $f(i)$ be the unique $j$ such that $\{v_i, w_j\}$ is one of the $m-1$ pairs of vertices whose addition to $G$ makes $G[S]$ connected. This concludes the proof of the theorem.
\end{proof}

\section{Diameter}

\label{sec:diameter}

In this section, we prove Theorem~\ref{thm:diameter}. Since adding edges to a graph can only decrease its diameter, it suffices to consider the case when $G$ is a tree and $\eps \le 1/3$. Since $e(G) = n-1$, it follows from Chernoff's inequality (Lemma~\ref{lemma:Chernoff}) that a.a.s.\ $\pG$ has at most $(1+\eps)n$ edges. Hence, it is enough to prove that there is a constant $C = C(\eps)$ such that a.a.s.
\begin{equation}
  \label{eq:diameter-claim}
  e\big(B(v, C\log n)\big) > \frac{(1+\eps)n}{2} \qquad \text{for every $v \in V(G)$,}
\end{equation}
where $B(v,r)$ denotes the $\pG$-ball of radius $r$ around $v$. Indeed, \eqref{eq:diameter-claim} implies that for every $u, v \in V(G)$, we have that $B(u, C\log n) \cap B(v, C\log n) \neq \emptyset$, and consequently $\diam(\pG) \le 2 C \log n$.

Fix some $v \in V(G)$, let $K$ and $\delta$ be as in Theorem~\ref{thm:expansion-connected} with $\alpha = 3/4$, and condition on the event that $\pG$ satisfies the assertion of this theorem. Moreover, condition on the event that $R$ satisfies the assertion of Lemma~\ref{lemma:eS} with $C = 1$. This implies that $e(S)/3 \le |S| \le e(S) + 1$ for every connected set $S$ in $\pG$. Since $\pG$ is connected, we clearly have that $|B(v,r)| \ge r+1$. Hence, if $r \ge K \log n$, we have that
\[
e(B(v,r+1)) \ge \min \left\{\frac{3n}{4} - 1, \left(1 + \frac{\delta}{3}\right) e\big(B(v,r)\big) \right\}.
\]
Letting $C = K + \frac{1}{\log(1+\delta/3)}$, we have that
\[
e(B(v, C\log n)) \ge \frac{3n}{4} - 1 > \frac{2n}{3} \ge \frac{(1+\eps)n}{2},
\]
as claimed.
\hfill\qed
\vspace{-0.5em}
\section{Mixing time}

\label{sec:mixing-time}

In this section, we prove Theorem \ref{thm:main}. Let $\eps$ be a positive real, let $D$ be a positive integer, and assume that $G$ is a connected $D$-degenerate graph. Let $\pG = G \cup R$, where $R \sim G(n, \eps/n)$.

Our argument for bounding the mixing time is based on the approach of Fountoulakis and Reed~\cite{FoRe07, FoRe08}. The main idea there is that one can bound the mixing time of an abstract irreducible, reversible, and aperiodic Markov chain in terms of the \emph{conductances} of \emph{connected} sets of states of various sizes. For simplicity, we only state their results in the setting of the lazy random walk on the graph $\pG$. Let $\pi$ be the stationary distribution of this walk. For $S \subseteq V$, let $\pi(S)$ equal $\sum_{v \in S} \pi(v)$. It can be verified that $\pi(S)=\frac{2e_{\pG}(S) + |\partial_{\pG} S|}{2e(\pG)}$.
We define
\[
Q(S) = \sum_{u \in S, v \not \in S} \pi(u) \Pr(u \to v) = \frac{|\partial_{\pG} S|}{4e(\pG)}
\]
and note that $Q(S) = Q(S^c)$. The \emph{conductance} $\Phi(S)$ of $S$ is
\[
\Phi(S) = \frac{Q(S)}{\pi(S) \pi(S^c)} = \frac{|\partial_{\pG} S|}{2 \cdot (2e_{\pG}(S) + |\partial_{\pG} S|) \cdot \pi(S^c)}.
\]
Let $\pi_{\min} = \min_{v \in V(G)} \pi(v)$. For $p > \pi_{\min}$, we denote by $\Phi(p)$ the minimum conductance of a connected (in $\pG$) set $S$ with $p/2 \le \pi(S) \le p$ (if there is no such $S$, we define $\Phi(p) = 1$). Fountoulakis and Reed~\cite{FoRe07} proved the following result.

\begin{thm}
  \label{thm:mixing-time}
  There exists an absolute constant $C$ such that
  \[
  \Tm(\pG) \le C \sum_{j=1}^{\lceil \log_2 \pi_{\min}^{-1} \rceil} \Phi^{-2}(2^{-j}).
  \]
\end{thm}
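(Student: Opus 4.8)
The plan is to prove Theorem~\ref{thm:mixing-time} via the \emph{evolving-set} technique (going back to Morris and Peres), together with one elementary observation that makes it suffice to control the conductances of \emph{connected} sets only: the conductance of any vertex set is bounded below by the minimum conductance of its connected components. Throughout, write $\varphi(A) := Q(A)/\min\{\pi(A), \pi(A^c)\}$ for the (unnormalised) conductance; one checks that $\varphi(A)$ agrees with the quantity $\Phi(A)$ of the excerpt up to a factor of $2$.

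First, I recall the evolving-set process $(S_t)_{t \ge 0}$ of the lazy walk on $\pG$: set $S_0 = \{x\}$ and, given $S_t$, draw $U_{t+1}$ uniformly on $[0,1]$ and put $S_{t+1} = \{y : Q(S_t, y) \ge U_{t+1}\,\pi(y)\}$, where $Q(B, y) = \sum_{u \in B} \pi(u)\Pr(u \to y)$. The standard facts are that $(\pi(S_t))_t$ is a martingale, that $\|P^t(x, \cdot) - \pi\|_{\mathrm{TV}}$ is controlled by a ``root-norm'' functional of the law of $S_t$ that tends to $0$, and --- the engine of the argument --- that this functional contracts at each step: there is an absolute constant $c_0 > 0$ such that
\[
\mathbb{E}\big[\sqrt{\pi(S_{t+1})} \mid S_t = A\big] \le \big(1 - c_0\,\varphi(A)^2\big)\sqrt{\pi(A)}.
\]
Consequently the functional at time $t$ is at most $\sqrt{\pi(x)}\prod_{k < t}\big(1 - c_0\,\varphi(S_k)^2\big)$, and it suffices to bound the number of steps needed to drive this product below $\tfrac14\sqrt{\pi_{\min}}$.

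Second, the reduction to connected sets. Decompose a set $A$ with $\pi(A) \le 1/2$ into the connected components $A_1, \dots, A_\ell$ of $\pG[A]$. Since no $\pG$-edge joins distinct $A_i$, the boundary $\partial_{\pG} A$ is the disjoint union of the $\partial_{\pG} A_i$, whence
\[
\varphi(A) = \frac{\sum_i \varphi(A_i)\,\pi(A_i)}{\sum_i \pi(A_i)} \ge \min_i \varphi(A_i),
\]
and each $\pi(A_i) \le \pi(A)$. Hence if $\pi(A) \in (2^{-j-1}, 2^{-j}]$, some connected $A_i$ lies in a dyadic band $j' \ge j$, so $\varphi(A) \ge \Phi(2^{-j'})$ for that $j'$; this is exactly what is needed to feed the connected-conductance profile $p \mapsto \Phi(p)$ into the contraction estimate of the previous paragraph.

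Third --- and here is the real work --- one must convert the product $\prod_k (1 - c_0\,\varphi(S_k)^2)$ into the \emph{dyadic sum} $\sum_j \Phi^{-2}(2^{-j})$ \emph{without} the logarithmic loss that a crude monotone-envelope bound incurs (that loss is precisely the gap between the Jerrum--Sinclair bound $O(\Phi^{-2}\log n)$ and the Lov\'asz--Kannan average-conductance bound). I would run the Lov\'asz--Kannan-style accounting band by band: track $m_t := \min\{\pi(S_t), \pi(S_t^c)\}$, which by the martingale property of $\pi(S_t)$ cannot jump across bands too often, and for each $j \in \{1, \dots, \lceil \log_2 \pi_{\min}^{-1}\rceil\}$ estimate --- via optional stopping together with a maximal inequality for $\pi(S_t)$ --- the expected number of steps for which $m_t$ lies in the band $(2^{-j-1}, 2^{-j}]$ before the process leaves that band for good; on each such step $\varphi(S_t) \ge \Phi(2^{-j'})$ for some $j' \ge j$ by the second step, and the steps that overshoot into a smaller band are charged to that band. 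This gives a per-band cost of $O(\Phi^{-2}(2^{-j}))$, and summing over $j$ yields $\Tm(\pG) \le C \sum_j \Phi^{-2}(2^{-j})$. The hard part is making this band-by-band accounting rigorous in the presence of the up-and-down fluctuations of the martingale $\pi(S_t)$, ensuring that overshoot steps are charged to the correct (smaller) band so that no band is paid for twice, and checking that the connectivity reduction is compatible with the banding --- which it is, since components never exceed their parent set in $\pi$-measure, so a disconnected set in band $j$ is always charged to some $j' \ge j$. The remaining ingredients --- the evolving-set identities and the components inequality --- are routine.
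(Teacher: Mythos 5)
The paper does not prove this statement: Theorem~\ref{thm:mixing-time} is imported verbatim from Fountoulakis and Reed~\cite{FoRe07} and used as a black box, so there is no internal proof to compare yours against; what matters is whether your sketch stands on its own. Your first two steps are sound: the evolving-set gauge bound $\mathbb{E}\big[\sqrt{\pi(S_{t+1})}\mid S_t=A\big]\le\big(1-c_0\,\varphi(A)^2\big)\sqrt{\pi(A)}$ is the correct Morris--Peres input for a lazy chain, and the mediant inequality $\varphi(A)\ge\min_i\varphi(A_i)$ over the connected components of $A$ (valid because $Q(A)=\sum_i Q(A_i)$ when no edge of $\pG$ joins distinct components and $\pi(A_i)\le\pi(A)\le 1/2$) is exactly the observation that lets one restrict attention to connected sets.

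The gap is your third step, which is where the entire content of the theorem lives and which you yourself flag as ``the hard part'' without carrying it out. Two concrete problems. First, the process whose $\pi$-measure must be tracked for the total-variation bound is not $(S_t)$ but its Doob transform $(\hat S_t)$ conditioned to be absorbed at $V$; under that transform $\pi(\hat S_t)$ is a strict submartingale rather than a martingale, so the optional-stopping and band-crossing control you propose to extract from ``the martingale property of $\pi(S_t)$'' does not apply to the process you actually need to analyse, and the expected occupation time of a dyadic band is not controlled in the way you assert. Second, your component reduction only yields $\varphi(S_t)\ge\Phi(2^{-j'})$ for some $j'\ge j$ when $\pi(S_t)$ lies in band $j$, and $j'$ can be far larger than $j$ (a large disconnected set can have a tiny controlling component); your proposed fix of charging such a step to band $j'$ then requires a proof that each band $j'$ absorbs only $O\big(\Phi^{-2}(2^{-j'})\big)$ charges in total from all bands $j\le j'$, which is neither obvious nor argued. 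Making these two points rigorous is essentially the whole of the Fountoulakis--Reed proof; as written, your proposal is a correct plan together with an acknowledgment that the plan has not been executed, which is not a proof.
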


In the remainder of the proof, we will estimate the sum in Theorem~\ref{thm:mixing-time}. We claim that it is enough to prove the following.

\begin{lemma}
  \label{lemma:main}
  There exist positive constants $\ds$ and $\Ks$ such that a.a.s.\ for every connected (in $\pG$) set $S$ with $\frac{\Ks \log n}{n} \le \pi(S) \le 1/2$,
  \[
  \Phi(S) \ge \ds.
  \]
\end{lemma}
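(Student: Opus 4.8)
\textbf{Proof proposal for Lemma~\ref{lemma:main}.}

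The plan is to translate the statement about conductance $\Phi(S)$ into a statement about the edge boundary $|\partial_{\pG} S|$ relative to $|S|$, to which Theorem~\ref{thm:expansion-connected} applies. First I would record the elementary bounds on $\pi$. Since $\pG$ is connected and $D$-degenerate after adding $O(n)$ random edges --- more precisely, by Lemma~\ref{lemma:eS} applied to $R$ with $C=\eps$ (or just $C=1$ if $\eps\le 1$), a.a.s.\ $e_R(S) < 2\eps|S|$ for every $S$, so $e_{\pG}(S) \le e_G(S) + e_R(S) \le (D+2\eps)|S|$, and also $e(\pG) \le e(G) + e(R) \le (1+\eps)n$ a.a.s.\ by Chernoff. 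Hence $\pi(v) = \deg_{\pG}(v)/(2e(\pG))$ and, crucially, $\pi(S) = \frac{2e_{\pG}(S)+|\partial_{\pG}S|}{2e(\pG)} \asymp \frac{|S|}{n}$: the upper bound $\pi(S) = O(|S|/n)$ follows from $2e_{\pG}(S)+|\partial_{\pG}S| \le 2\deg\text{-sum} \le (2D+4\eps+\Delta_R)|S|$, but more simply from degeneracy $2e_{\pG}(S)+|\partial_{\pG}S| = \sum_{v\in S}\deg_{\pG}(v)$ and we only need $\pi(S)\le 1/2 \Rightarrow |S| \le (1-\eps')n$ for some $\eps'>0$, which I would derive from $\pi(S)\le 1/2 \Leftrightarrow \pi(S^c)\ge 1/2$ together with the lower bound $\pi(S^c) \ge |S^c|/(2e(\pG)) \ge |S^c|/(2(1+\eps)n)$.

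Next, the hypothesis $\pi(S) \ge \Ks\log n / n$ combined with $\pi(S) = O(|S|/n)$ gives $|S| \ge c\Ks \log n$, so choosing $\Ks$ large makes $|S| \ge K\log n$ where $K$ is the constant from Theorem~\ref{thm:expansion-connected} (applied with $\alpha = 1-\eps'$ from the previous paragraph). That theorem then yields $|\partial_{\pG}S| \ge \delta |S|$ a.a.s.\ for all such connected $S$. Now I estimate $\Phi(S)$ directly:
\[
\Phi(S) = \frac{|\partial_{\pG}S|}{2(2e_{\pG}(S)+|\partial_{\pG}S|)\pi(S^c)} \ge \frac{|\partial_{\pG}S|}{2(2e_{\pG}(S)+|\partial_{\pG}S|)},
\]
using $\pi(S^c) \le 1$. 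Since $2e_{\pG}(S) \le 2(D+2\eps)|S|$ and $|\partial_{\pG}S| \le \deg\text{-sum over }S$, and $|\partial_{\pG}S|\ge \delta|S|$, the ratio is at least $\frac{\delta|S|}{2(2(D+2\eps)+ \delta')|S|} = \frac{\delta}{2(2D+4\eps+\delta')} =: \ds > 0$, where $\delta'$ bounds $|\partial_{\pG}S|/|S| \le \deg\text{-sum}/|S|$; here I need an upper bound on the average degree inside $S$, which for the boundary term I can take to be $2e(\pG)/1$... no --- more carefully, $2e_{\pG}(S) + |\partial_{\pG}S| = \sum_{v\in S}\deg_{\pG}(v) \le 2D|S| + \sum_{v\in S}\deg_R(v)$, and $\sum_{v\in S}\deg_R(v) \le 2e_R(S) + |\partial_R S|$; the term $|\partial_R S|$ is what makes this delicate since random-graph degrees are unbounded. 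So I would instead bound $2e_{\pG}(S)+|\partial_{\pG}S| = 2e(\pG)\cdot\pi(S) = 2e(\pG)\cdot\pi(S)$ and use $\pi(S)\le 1/2$ plus $\pi(S) = O(|S|/n)$ to get $2e_{\pG}(S)+|\partial_{\pG}S| \le 2(1+\eps)n \cdot O(|S|/n) = O(|S|)$, cleanly avoiding individual degrees.

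The main obstacle I anticipate is precisely the two-sided control $\pi(S) \asymp |S|/n$, i.e., ensuring that the degree sum $\sum_{v\in S}\deg_{\pG}(v)$ is $\Theta(|S|)$ uniformly over all relevant $S$ despite $R$ having vertices of degree $\Theta(\log n/\log\log n)$. The lower bound $\pi(S)\ge |S|/(2e(\pG)) \ge \Omega(|S|/n)$ is immediate from $\deg_{\pG}(v)\ge 1$. For the upper bound, rather than controlling $\sum_{v\in S}\deg_R(v)$ vertex-by-vertex, the clean route is: $\sum_{v\in S}\deg_R(v) = 2e_R(S) + |\partial_R S| = 2e_R(S) + e_R(S,S^c)$, and $e_R(S)+e_R(S,S^c) = e_R(S, V) \le$ the number of edges of $R$ incident to $S$, which is stochastically dominated by $\Bin(|S|\cdot n, \eps/n)$; a union bound over all $O(2^n)$... no, that fails. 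The correct fix, which I will adopt, is to note we only need the upper bound $\pi(S)=O(|S|/n)$ for the specific connected sets arising, and combine the trivial $2e_{\pG}(S)+|\partial_{\pG}S| \le 2e(\pG) = O(n)$ with the case analysis: if $|S| \ge \beta n$ for a small constant $\beta$, then $2e_{\pG}(S)+|\partial_{\pG}S| \le 2e(\pG) = O(n) = O(|S|)$ directly; if $|S| < \beta n$, then... here I do need $\sum_{v\in S}\deg_R(v) = O(|S|)$. This last estimate I would get from Lemma~\ref{lemma:eS}-type reasoning applied to the bipartite count $e_R(S, V)$: for fixed $S$ of size $s$, $\Pr(e_R(S,V) \ge 10\eps s) \le \binom{sn}{10\eps s}(\eps/n)^{10\eps s} \le (e/10)^{10\eps s} = e^{-\Omega(s)}$, and union over $\binom{n}{s} \le (en/s)^s$ choices costs $e^{s\log(en/s)}$, which for $s \ge K'\log n$ with $K'$ large is beaten by $e^{-\Omega(s)}\cdot$... no, $\log(en/s)$ can be $\Theta(\log n)$. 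This is the genuine difficulty. The resolution I will use is that Theorem~\ref{thm:expansion-connected} already restricts to \emph{connected} sets, and by Proposition~\ref{prop:Cvab} the number of connected sets of size $s$ containing a fixed vertex with boundary $b$ is only $\binom{a+b-1}{b}$; so I count connected $S$ with a prescribed bound on $e_R(S,V)$ using the same union-bound machinery as in the proof of Theorem~\ref{thm:expansion-connected}, where the relevant enumeration is polynomial-times-$\exp(o(s))$ rather than $\exp(\Omega(s\log n))$, and this defeats the $e^{-\Omega(s)}$ tail. In the write-up I will fold this into a single a.a.s.\ event (holding simultaneously with the event of Theorem~\ref{thm:expansion-connected}): for every connected $S$ in $\pG$ with $|S|\ge K\log n$, one has $\sum_{v\in S}\deg_R(v) \le C_1|S|$, hence $\pi(S)\le C_2|S|/n$, and the conductance bound follows as above with $\ds = \ds(\eps,D)$.
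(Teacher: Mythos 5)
Your overall plan---translate the two conditions on $\pi(S)$ into cardinality conditions on $S$ and then invoke Theorem~\ref{thm:expansion-connected}---is the right skeleton, but both translations, which is where the actual content of the lemma lies, have gaps. First, your derivation of $\pi(S)\le 1/2\Rightarrow |S|\le(1-\eps')n$ does not work as stated: you propose to combine $\pi(S^c)\ge 1/2$ with the \emph{lower} bound $\pi(S^c)\ge |S^c|/(2e(\pG))$, but two lower bounds on $\pi(S^c)$ cannot yield a lower bound on $|S^c|$. The correct route uses the connectivity of $S$ together with the sparseness of $S^c$: from $2e_{\pG}(S)=2e(\pG)\pi(S)-|\partial_{\pG}S|\le 2e(\pG)\pi(S^c)-|\partial_{\pG}S^c|=2e_{\pG}(S^c)\le 2\Ds|S^c|$ (with $\Ds=D+2\max\{\eps,1\}$ coming from degeneracy plus Lemma~\ref{lemma:eS}) and $e_{\pG}(S)\ge|S|-1$ one gets $|S|\le(\Ds n+1)/(\Ds+1)$, so $\alpha=\Ds/(\Ds+1)$ works.

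Second, the ``genuine difficulty'' you spend most of the proposal on---establishing $\sum_{v\in S}\deg_R(v)=O(|S|)$ uniformly over connected $S$, which you need both to deduce $|S|\ge K\log n$ from $\pi(S)\ge \Ks\log n/n$ and for your final bound $2e_{\pG}(S)+|\partial_{\pG}S|=O(|S|)$---is a problem you have created for yourself, and your proposed resolution is only a sketch: enumerating connected-in-$\pG$ sets with \emph{no} restriction on their boundary is precisely the regime where the $\binom{a+b-1}{b}$ count blows up (degrees in $R$ reach $\Theta(\log n/\log\log n)$), so one would have to redo the whole $\cSp$ bookkeeping with an extra parameter, and it is not clear the numbers work out. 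The issue dissolves under a simple dichotomy on $|\partial_{\pG}S|$. If $|\partial_{\pG}S|\ge|S|$, then since $x\mapsto x/(4e_{\pG}(S)+2x)$ is increasing and $e_{\pG}(S)\le\Ds|S|$ (a bound on edges \emph{inside} $S$, which is controlled by degeneracy and Lemma~\ref{lemma:eS}, with no degree bound needed), we get $\Phi(S)\ge |\partial_{\pG}S|/(2(2e_{\pG}(S)+|\partial_{\pG}S|))\ge 1/(4\Ds+2)$ outright. If instead $|\partial_{\pG}S|<|S|$, then the degree sum satisfies $2e_{\pG}(S)+|\partial_{\pG}S|\le(2\Ds+1)|S|$ for free, whence $\Ks\log n\le 2e(\pG)\pi(S)\le(2\Ds+1)|S|$, so taking $\Ks=(2\Ds+1)K$ gives $|S|\ge K\log n$; Theorem~\ref{thm:expansion-connected} then yields $|\partial_{\pG}S|\ge\delta|S|$ and the same monotonicity gives $\Phi(S)\ge\delta/(4\Ds+2\delta)$. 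Note also that this case split handles sets such as a single vertex of $R$-degree $\Theta(\log n)$, which satisfy $\pi(S)\ge\Ks\log n/n$ but not $|S|\ge K\log n$: they fall into the first case.
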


Indeed, suppose that the assertion of Lemma~\ref{lemma:main} holds for some $\ds$ and $\Ks$. Let $J$ be the set of indices $j$ satisfying $2^{-j} \le \frac{2\Ks \log n}{n}$ and note the $|J^c| < \log_2 n$, as $2^{-j} > \frac{2\Ks \log n}{n}$ implies that $j < \log_2 n$. Since $\pG$ is connected, we have that for every set $S$,
\[
\Phi(S) \ge \frac{|\partial_{\pG} S|}{4e(\pG) \cdot \pi(S)} \ge \frac{1}{4e(\pG) \cdot \pi(S)}.
\]
Condition on the event that $R$ satisfies the assertion of Lemma~\ref{lemma:eS} with $C = \max\{\eps, 1\}$. Let $\Ds = D + 2C$ and observe that the degeneracy assumption implies that
\begin{equation}
  \label{eq:Ds-sparse}
  e_{\pG}(S) \le \Ds|S| \quad \text{for every $S \subseteq V(G)$}.
\end{equation}
In particular, $e(\pG) \le \Ds n$ and hence, letting $M = 129 (\Ks)^2 (\Ds)^2$,
\[
\begin{split}
  \sum_{j=1}^{\lceil \log_2 \pi_{\min}^{-1} \rceil} \Phi^{-2}(2^{-j}) & \le |J^c| \cdot (\ds)^{-2} + \sum_{j \in J} 2^{-2j} (4e(\pG))^2 \\
  & \le O(\log n) + 2 \cdot \max_{j \in J} \{2^{-2j}\} \cdot 16(\Ds)^2n^2 \le M \log^2 n,
\end{split}
\]
provided that $n$ is sufficiently large, where we used the definition of $J$ and the inequality $\sum_{j \ge i} 2^{-2j} \le 2^{-2i+1}$.

Therefore, it suffices to prove Lemma~\ref{lemma:main}. We first show that any connected set $S$ with $\pi(S) \le 1/2$ has at most $n - \Omega(n)$ elements.

\begin{claim}
  Every connected (in $\pG$) set $S \subseteq V(G)$ with $\pi(S) \le 1/2$ satisfies
  \[
  |S| \le \frac{\Ds n+1}{\Ds+1}.
  \]
\end{claim}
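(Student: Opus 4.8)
The plan is to translate the hypothesis $\pi(S) \le 1/2$ into a comparison between the number of edges spanned by $S$ and by its complement, and then play the lower bound coming from connectivity against the upper bound coming from degeneracy. Recall the identity $\pi(S) = \frac{2e_{\pG}(S) + |\partial_{\pG}S|}{2e(\pG)}$ noted just before the statement, and that $e(\pG) = e_{\pG}(S) + e_{\pG}(S^c) + |\partial_{\pG}S|$. Plugging the latter into the former, one checks that the inequality $\pi(S) \le \tfrac12$ is equivalent to $2e_{\pG}(S) + |\partial_{\pG}S| \le e_{\pG}(S) + e_{\pG}(S^c) + |\partial_{\pG}S|$, i.e.\ simply to
\[
e_{\pG}(S) \le e_{\pG}(S^c).
\]

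With this reformulation in hand, the rest is immediate. Since $S$ is connected in $\pG$, the induced subgraph $\pG[S]$ is connected and hence spans at least $|S|-1$ edges, so $e_{\pG}(S) \ge |S| - 1$. On the other hand, applying the degeneracy bound~\eqref{eq:Ds-sparse} to the set $S^c$ gives $e_{\pG}(S^c) \le \Ds |S^c| = \Ds(n - |S|)$. Chaining these with the displayed inequality yields $|S| - 1 \le \Ds(n - |S|)$, which rearranges to $|S|(\Ds + 1) \le \Ds n + 1$, i.e.\ $|S| \le \frac{\Ds n + 1}{\Ds + 1}$, as claimed. (Note we are conditioning on the event from Lemma~\ref{lemma:eS} under which~\eqref{eq:Ds-sparse} holds, exactly as set up before the claim.)

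There is no serious obstacle here: the only point requiring a moment's care is the algebraic manipulation identifying $\pi(S)\le\tfrac12$ with $e_{\pG}(S)\le e_{\pG}(S^c)$ (in particular keeping track that the boundary term $|\partial_{\pG}S|=|\partial_{\pG}S^c|$ cancels), and the observation that the degeneracy inequality~\eqref{eq:Ds-sparse} is available for the complement $S^c$ and not just for $S$ itself — which is fine since it was stated for every subset of $V(G)$.
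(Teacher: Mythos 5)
Your proof is correct and follows essentially the same route as the paper's: both reduce $\pi(S)\le 1/2$ to $e_{\pG}(S)\le e_{\pG}(S^c)$ (you via the edge decomposition of $e(\pG)$, the paper via $\pi(S)\le\pi(S^c)$ and the formula for $\pi$ — the same computation), and then both combine $e_{\pG}(S)\ge|S|-1$ from connectivity with the degeneracy bound~\eqref{eq:Ds-sparse} applied to $S^c$.
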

\begin{proof}
  Since $\pi(S) \le 1/2$ implies that $\pi(S) \le \pi(S^c)$, we have
  \[
  \begin{split}
    2e_{\pG}(S) & = 2e(\pG)\pi(S) - |\partial_{\pG} S| \le 2e(\pG)\pi(S^c) - |\partial_{\pG} S| \\
    & = 2e_{\pG}(S^c) \le 2\Ds|S^c| = 2\Ds(n-|S|).
  \end{split}
  \]
  Since $S$ is connected in $\pG$, we obtain $e_{\pG}(S) \ge |S|-1$ and the claim follows.
\end{proof}

Let $\delta$ and $K$ be as in Theorem~\ref{thm:expansion-connected} with $\alpha = \frac{\Ds+1}{\Ds+2}$ and condition on the event that $\pG$ satisfies the assertion of this theorem. Let
\[
\Ks = (2\Ds+1)K \qquad \text{and let} \qquad \ds = \min\left\{ \frac{1}{2\Ds + 1}, \frac{\delta}{2\Ds + \delta} \right\}.
\]
It follows from~\eqref{eq:Ds-sparse} that for every connected set $S$ with $\pi(S) \le 1/2$, we have
\[
\Phi(S) \ge \frac{|\partial_{\pG} S|}{2e_{\pG}(S) + |\partial_{\pG} S|} \ge \frac{|\partial_{\pG} S|}{2\Ds|S| + |\partial_{\pG} S|}.
\]
Note that if $|\partial_{\pG} S| \ge |S|$, then $\Phi(S) \ge \ds$, so we may assume otherwise. In particular, if $\pi(S) \ge \Ks\log n /n$, then, as $e(\pG)\ge n-1$, we have
\[
\Ks\log n \le 2\pi(S)e(\pG) = 2e_{\pG}(S) + |\partial_{\pG} S| \le (2\Ds + 1)|S|
\]
and hence $|\partial_{\pG} S| \ge \delta |S|$. It follows that $\Phi(S) \ge \ds$. This concludes the proof of Lemma~\ref{lemma:main} and therefore the proof of Theorem~\ref{thm:main}.
\hfill\qed

\section{Long Paths}

\label{sec:long}

Here we show that after adding random edges, each with probability $\frac{\eps}{n}$, to a connected $n$-vertex graph with bounded maximum degree, we a.a.s.\ get a path whose length is linear in $n$.

\begin{proof}[Proof of Theorem~\ref{thm:long}]
  Let $k$ be a sufficiently large constant. Similarly as in the proof of Theorem~\ref{thm:tree}, let us partition the vertex set of the graph $G$ into connected pieces (blobs) $V_1, \ldots, V_t$ such that for each $1\le i\le t$, we have $k \le |V_i| \le \Delta k$. As in Theorem~\ref{thm:tree}, the probability that two blobs are connected (in $\pG$) is at least $\frac{\eps k^2}{2n}$. Hence, if $k$ is sufficiently large, then the auxiliary graph naturally induced by the blobs (obtained by treating each blob as a super-vertex and connecting two super-vertices if there is an edge of $\pG$ connecting the two blobs) contains the random graph $G(t, C/t)$, where $C \to \infty$ as $k \to \infty$. It is well known (\cite{AKS}, see also~\cite{KS}) that if $C > 1$, then $G(t, C/t)$ a.a.s.\ contains a path $P_0$ of length $\Omega(t)$. Since the blobs are connected and $t \ge n/(\Delta k)$, one can turn $P_0$ into a path $P$ in $\pG$, whose length is at least as large as the length of~$P_0$. Indeed, we may use the edges of $P_0$ to move between the blobs and the edges of $G$ to connect the entry and the exit points of $P_0$ within each blob traversed by $P_0$.
\end{proof}
\section{Concluding remarks}

\label{sec:concluding-remarks}

In this paper, we studied the model of randomly perturbed connected graphs. Using a new general upper bound on the number of connected subsets with small vertex boundary, we proved lower bounds on edge expansion under mild assumptions on the base graph. We established several other interesting properties of randomly perturbed connected graphs such as bounds on the diameter and the mixing time of the lazy random walk. It would be interesting to study other parameters of this model.

It seems that randomly perturbed connected $n$-vertex graph with bounded degeneracy shares some similarities with the giant component in the supercritical Erd{\H{o}}s--R{\'e}nyi random graph $G(n,\frac{1+\eps}{n})$. In particular, a.a.s.\ they both have diameter $O(\log n)$, mixing time $O(\log^2 n)$, and contain paths of length $\Omega(n)$. It could be interesting to explore this analogy further and to check whether the methods used in this work to study the model of randomly perturbed graphs can be applied to the other model.

\medskip

\noindent
\textbf{Acknowledgments.} We would like to thank Uri Feige, Jon Kleinberg, Gady Kozma, and Ofer Zeitouni for motivating discussions. We thank Yuval Peres for pointing out an inaccuracy in a previous version of this work and Noga Alon for pointing out the relation between our Proposition~\ref{prop:Cvab} and the result of Bollob\'as~\cite{Bo65}. Finally, we thank two anonymous referees for their careful reading of the paper and several insightful comments and suggestions.

\bibliographystyle{amsplain}
\bibliography{tree-exp}

\end{document}